\documentclass[11pt]{amsart}
\usepackage[centertags]{amsmath}
\usepackage{amsfonts,amssymb}
\usepackage{graphicx}
\usepackage{enumerate}
 \usepackage{url}
\usepackage{caption}
  \newtheorem{theorem}{Theorem}
  \newtheorem{corollary}{Corollary}

  \newtheorem{lemma}{Lemma}

\begin{document}

\title{Explicit bounds for Buchstab's function}

\author{Andreas Weingartner} 
\address{Department of Mathematics, Southern Utah University, 351 West University Boulevard, Cedar City, Utah 84720, USA} 
\email{weingartner@suu.edu} 

\begin{abstract}
Buchstab's function $\omega(u)$ describes the distribution of integers without small prime factors.
We establish numerically explicit upper and lower bounds for $\omega(u)$ that are easy to evaluate,
without the need to solve the delay differential equation numerically.
\end{abstract}

\maketitle

\section{Introduction}
Buchstab's function $\omega(u)$ is defined as the continuous solution, for $u>1$, to the delay differential equation
$(u \omega(u))'=\omega(u-1)$ with initial condition $\omega(u)=1/u$ for $1\le u\le 2$. 
If $\Phi(x,y)$ denotes the number of positive integers up to $x$ whose prime factors are all greater than $y$,
Buchstab \cite{Buch} proved that for fixed $u>1$ we have $\Phi(x,x^{1/u}) \sim x \omega(u)/\log (x^{1/u})$ as $x\to \infty$. 

As $u$ grows,  $\omega(u)$ oscillates about $e^{-\gamma}$ and converges to this value very rapidly, 
where $\gamma$ is Euler's constant.
Cheer and Goldston \cite{CG} calculate numerical values of the local extrema of $\omega(u)-e^{-\gamma}$ for $u<11$.
Those extreme values can be used to establish an unusual number of primes in short intervals.
Maier \cite{Maier} showed that (see \cite[(6)]{CG}), for $v>1$,
$$
\limsup_{x\to \infty} \frac{\pi(x+(\log x)^v )- \pi(x)}{(\log x)^{v-1}} \ge 1+e^\gamma \max_{u\ge v} (\omega(u)-e^{-\gamma}),
$$
$$
\liminf_{x\to \infty} \frac{\pi(x+(\log x)^v )- \pi(x)}{(\log x)^{v-1}} \le 1+e^\gamma \min_{u\ge v} (\omega(u)-e^{-\gamma}),
$$
where $\pi(x)$ denotes the number of primes up to $x$.
The purpose of this note is to establish numerically explicit upper and lower bounds for
$$
W(u):= \omega(u)-e^{-\gamma}.
$$

For $u\ge 1$, let $\zeta=\zeta(u) = \mu-\eta i$ be the unique complex solution in the range $\mu\ge 1$, $0< \eta < 3\pi/2$, of 
$$e^\zeta=-u\zeta .$$ 
Then $\zeta=\log(u\log u)-\pi i +o(1)$ as $u\to \infty$, by Lemma \ref{lemzeta}.

For $s\in \mathbb{C}\setminus (-\infty,0]$, define
$$
J(s):=\int_0^\infty \frac{e^{-s-t}}{s+t} dt =  \int_s^\infty \frac{e^{-z}}{z}dz,
$$
and extend the definition of $J(s)$ to the negative real axis by continuity from the upper half-plane
($J(s)$ jumps by $2\pi i$ along the negative real axis).
Let
$$
\Phi(u):=\frac{\exp\{-u\zeta+J(-\zeta)\}}{\sqrt{2 \pi J''(-\zeta)}} = \frac{\exp\{-u\zeta+J(-\zeta)\}}{\sqrt{2 \pi u(1-1/\zeta)}},
$$
where the square root denotes the principal branch. 
With a slightly different definition of $\zeta$ (namely $e^{\zeta^*}=1-u \zeta^*$), 
Hildebrand \cite[(2.5)']{Hil} shows
that 
\begin{equation}\label{HilEq}
W(u)=2 |\Phi(u)|\{\cos(\arg(\Phi(u)))+O(1/u)\}.
\end{equation}
Moreover, he finds \cite[Cor.~2]{Hil} that 
$$|\Phi(u)| = \rho(u) \exp\{-(1+o(1)) u\pi^2/(2\log^2 u)\} \qquad (u\to \infty),$$ 
where $\rho(u)$ is Dickman's function, and 
$$(\arg(\Phi(u))'\sim \pi \qquad  (u \to \infty).$$

We can improve the relative error $O(1/u)$ in \eqref{HilEq} if we replace $\Phi(u)$ by
$$
\Phi_1(u):= \Phi(u)\left(1-\frac{1}{12u}\right).
$$
Theorem \ref{thm1} provides numerically explicit estimates for $W(u)$, and hence for $\omega(u)$,
without the need to solve the delay differential equation numerically.
Since $\omega(u)=1/u$ for $1\le u \le 2$ and $\omega(u)=(1+\log(u-1))/u$ for $2 \le u \le 3$, we 
may assume $u\ge 3$ without any loss of generality. 

\begin{theorem}\label{thm1}
We have 
$$
W(u)=2 |\Phi_1(u)|\left\{\cos(\arg(\Phi_1(u)))+\theta_1(u)\right\},
$$
where $|\theta_1(u)|< \frac{1}{2u(u-1)}$ for $3\le u\le 6$ and $|\theta_1(u)|< \frac{1}{12u\log u}$ for $u\ge 6$.
\end{theorem}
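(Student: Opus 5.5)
Write $\omega(u)$ through its Laplace transform, isolate the residue contribution of the zero $\zeta$ and its conjugate, and bound what remains explicitly.

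\emph{Laplace representation.} From $(u\omega(u))'=\omega(u-1)$ one gets, for $\mathrm{Re}\, s>0$, the classical identity
$$
\int_1^\infty e^{-su}\omega(u)\,du=\frac{\exp(-J(s))}{s}-\frac1s
$$
(up to normalisation), which equals $\hat\omega(s)$.
\begin{itemize}
\item The term $-1/s$ together with the pole of $e^{-J(s)}/s$ at $s=0$ produce the constant $e^{-\gamma}$, since $J(s)=-\gamma-\log s+O(s)$.
\item The function $e^{-J(s)}/s$ is entire apart from $s=0$, so the remaining function has no further poles.
\end{itemize}
Hildebrand's approach instead shifts the inversion contour $u\mapsto \frac{1}{2\pi i}\int e^{us}\hat W(s)\,ds$ to a path passing through the saddle points of $us-J(s)$. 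These are the solutions of $u=J'(s)=-e^{-s}/s$, i.e.\ $s=-\zeta$ and $s=-\bar\zeta$, matching $e^{\zeta}=-u\zeta$.

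\emph{Saddle-point expansion.} I would take the contour to be the vertical line $\mathrm{Re}\, s=-\mu$. Since $W$ is real, $W(u)=2\,\mathrm{Re}\, I(u)$, where $I(u)$ is the contribution of the half-line through $-\zeta$. Near $s=-\zeta$ I would write $s=-\zeta+it$ and expand
$$
us-J(s)=-u\zeta-J(-\zeta)\;-\;\tfrac12 J''(-\zeta)(it)^2+\cdots.
$$
The sign conventions must be checked so that the leading term reproduces $\Phi(u)$. The derivatives satisfy
$$
J^{(k)}(-\zeta)=(-1)^{k-1}\frac{d^{k-1}}{ds^{k-1}}\bigl(e^{-s}/s\bigr),
$$
so all of them are explicit in terms of $u$ and $\zeta$, e.g.\ $J''(-\zeta)=u(1-1/\zeta)$. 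A second-order Laplace-method expansion (the cubic term squared and the quartic term) gives
$$
I(u)=\Phi(u)\bigl(1+c(u)+R(u)\bigr),
$$
where $c(u)=-\frac{1}{12u}+O(1/(u|\zeta|))$. Absorbing the main part into $\Phi_1$ leaves
$$
W(u)=2\,\mathrm{Re}\bigl(\Phi_1(u)(1+\epsilon(u))\bigr),\qquad |\epsilon(u)|\ll \frac{1}{u\,|\zeta|}\asymp\frac{1}{u\log u}.
$$
Then $\theta_1=\mathrm{Re}\bigl(e^{i\arg\Phi_1}\epsilon\bigr)$ satisfies $|\theta_1|\le|\epsilon|$, which gives exactly the claimed form.

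\emph{Explicit constants.} This is the main obstacle. The plan has three parts.
\begin{enumerate}
\item Bound the Taylor remainder of $e^{-s}/s$ on a disc $|t|\le \delta$, using Lemma~\ref{lemzeta}'s information on $\mu$ and $\eta$, namely $\mu\ge1$ and $|\zeta|\ge\mu$.
\item For $|t|>\delta$, bound the tail using $\mathrm{Re}(J(-\zeta+it)-J(-\zeta))$. Along the vertical line,
$$
\frac{d}{dt}\,\mathrm{Re}(-J)=\mathrm{Im}\Bigl(\frac{e^{\zeta-it}}{-\zeta+it}\Bigr),
$$
whose integral is explicitly $\ge c\,u\min(t^2,1)$, because $\bigl|e^{\zeta}/(\zeta-it)\bigr|=u|\zeta|/|\zeta-it|$. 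This makes the tail exponentially small relative to $|\Phi|$.
\item Add the contribution from the possible other zeros $\zeta_k$ with larger imaginary parts, if the contour is pushed past them. Their sizes are smaller by a factor $e^{-cu}$, so this piece is also absorbed.
\end{enumerate}
The remaining work is careful bookkeeping: the crude-but-explicit bounds give $\frac{1}{12u\log u}$ for $u\ge6$. On the finite range $3\le u\le6$ I would verify the stronger weaker-form bound $\frac{1}{2u(u-1)}$ by rigorous numerical comparison, computing $\omega$ exactly piecewise by integrating the delay equation with interval arithmetic, since there the asymptotic error estimates are not yet sharp enough.
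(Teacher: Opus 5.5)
Your architecture matches the paper's: the inverse Laplace integral on $\mathrm{Re}\,s=-\mu$, Laplace's method at $-\zeta$, absorbing $-\frac{1}{12u}$ into $\Phi_1$, and $|\theta_1|\le|\epsilon|$. But the set-up as written concerns the wrong function, and the explicit-constant step would fail.

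\textbf{The transform.} Since $e^{-J(s)}/s=\widehat{\rho}(s)$ is Dickman's transform, your identity is the transform of $\rho-1$. Indeed $e^{-J(s)}/s\to e^{\gamma}$ as $s\to0$, so it has no pole there, and the only residue is $-1$, coming from $-1/s$. Likewise your saddle equation $u=J'(s)$ gives $e^{\zeta}=u\zeta$ at $s=-\zeta$, not $e^{\zeta}=-u\zeta$. The correct input is Lemma~\ref{lemlap}: $\widehat{\omega}(s)=e^{J(s)}-1$. This gives the integrand $(e^{J(s)}-1)e^{us}$, the saddle point where $J'(s)=-u$, and the leading term $e^{J(-\zeta)-u\zeta}/\sqrt{2\pi J''(-\zeta)}=\Phi(u)$.

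\textbf{The tail.} Your claim that $\mathrm{Re}\bigl(J(-\zeta)-J(-\zeta+i\tau)\bigr)\ge cu\min(\tau^2,1)$ is false. For $\tau$ near $2\pi$ this quantity is only of order $u/\mu^2\asymp u/\log^2u$. That reflects the next saddle point, which is damped by $e^{-cu/\log^2 u}$, not $e^{-cu}$. This is why Lemma~\ref{lemReJk} gives only $-8u/\log^2u$ for $\tau\ge4$. Moreover, on $\mathrm{Re}\,s=-\mu$ the integrand has modulus about $e^{\mu-u\mu}/t$ for large $t$, so no modulus bound converges. The range $t\ge e^{\mu}$ and the term $-1$ must be handled by replacing $e^J-1$ with $J$ and integrating by parts. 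Finally, there are no further poles, so there are no residues from other $\zeta_k$ to add.

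\textbf{The range of the analytic argument.} This is the step that would fail: the claim that crude explicit bounds give $\frac{1}{12u\log u}$ for all $u\ge6$. The target exceeds the unavoidable discrepancy $|\alpha(u)+\frac{1}{12u}|$ by only a modest factor; Lemma~\ref{lemalpha} bounds that discrepancy by $\frac{1}{12u\log(u\log u)}$. So $\beta(u)\sim\frac{1}{288u^2}$, the Taylor remainder and the tails all need small explicit constants.

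The paper's seventh-order expansion gives a remainder $8.36u^{-3}$ relative to $|\Phi(u)|$, and only for $u\ge10^3$. That range is needed for $\delta\le1/4$, $\mathrm{Re}\,M<1/4$, and the first bound of Lemma~\ref{lemReJk}. Even if that remainder held at $u=6$, it would be about $0.039$, roughly five times the whole target $\frac{1}{72\log 6}\approx0.0078$. There $|\zeta|\approx5$, so no expansion parameter is small, and a second-order expansion with crude constants cannot close the gap.

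This is why the paper proves Theorem~\ref{thm1} analytically only for $u\ge10^3$, using \eqref{eqfin}, Lemma~\ref{lemalpha} and $|u^2\beta(u)|<\frac{1}{288}$. It verifies $3\le u\le10^3$ rigorously by computer: the Marsaglia--Zaman--Marsaglia Taylor recursion for $u\le50$, and numerical evaluation of the truncated saddle-point integral $S(u)$, with explicit truncation error $E(u)$, for $50\le u\le10^3$. Your numerical step must cover that whole range, not just $[3,6]$. Near its top, integrating the delay equation directly would need thousands of digits of working precision, since $W(10^3)\approx-1.75\cdot10^{-3512}$ has to be extracted from $\omega(u)-e^{-\gamma}$.
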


\begin{corollary}\label{cor1}
We have $|W(u)|<2|\Phi(u)|$ for $u\ge 3$. 
\end{corollary}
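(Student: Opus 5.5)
The plan is to deduce the corollary directly from Theorem \ref{thm1} and to handle a short initial range separately. Since $\Phi_1(u)=\Phi(u)(1-\frac{1}{12u})$ and $1-\frac1{12u}>0$, Theorem \ref{thm1} gives
$$
|W(u)|\le 2|\Phi(u)|\Bigl(1-\frac{1}{12u}\Bigr)\bigl(|\cos(\arg\Phi_1(u))|+|\theta_1(u)|\bigr)\le 2|\Phi(u)|\Bigl(1-\frac{1}{12u}\Bigr)\bigl(1+|\theta_1(u)|\bigr).
$$
Writing $a=\frac1{12u}$, we have $(1-a)(1+b)<1$ if and only if $b<\frac{a}{1-a}=\frac{1}{12u-1}$. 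So it suffices to check that $|\theta_1(u)|<\frac1{12u-1}$.

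For $u\ge 6$ this is immediate: $|\theta_1(u)|<\frac{1}{12u\log u}$ and $12u\log u>12u>12u-1$ because $\log u>1$.

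For $3\le u\le 6$ the bound $|\theta_1|<\frac1{2u(u-1)}$ is not good enough in general. Indeed, $2u(u-1)\ge 12u-1$ only for $u\ge (14+\sqrt{188})/4\approx 6.93$. So in this range I will not use the crude estimate $|\cos|\le1$. Instead I will use the actual size of $|\cos(\arg\Phi_1(u))|$. The inequality can only fail where $|\cos(\arg\Phi(u))|>1-\frac{1}{12u-1}\cdot\frac{1}{1-\cdots}$, roughly where $|\cos(\arg\Phi(u))|\ge 1-\frac1{2u(u-1)}+\frac1{12u}$. Note that $\arg\Phi_1=\arg\Phi$.

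Concretely, I would cover $[3,6]$ by a fine grid and, on each subinterval, bound $\arg\Phi(u)$ and $|\Phi(u)|$ using derivative bounds. Differentiating $e^\zeta=-u\zeta$ gives $\zeta'(u)=\zeta/(u(\zeta-1))$. Since $\frac{d}{du}J(-\zeta)=-e^{\zeta}/\zeta\cdot\zeta'\cdot(-1)$, this simplifies, and $\frac{d}{du}\log\Phi(u)=-\zeta+O(\cdot)$ explicitly. This yields Lipschitz constants for $\arg\Phi$ (slope near $\pi$) on $[3,6]$. The check then is that wherever $|\cos\arg\Phi|$ exceeds the threshold, the inequality still holds. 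If it does not, I would fall back on a direct comparison: bound $|W(u)|$ there by an independent argument, for instance from the explicit formula on $[2,3]$ propagated via $(u\omega)'=\omega(u-1)$ to get monotonicity and extremal values of $\omega$ on $[3,6]$, as in Cheer--Goldston. I would then compare with a rigorous lower bound for $2|\Phi(u)|$ at those points.

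I expect this finite-range verification for $3\le u\le6$, near the local extrema of $W$, to be the main obstacle. The range $u\ge6$ is a one-line consequence of Theorem \ref{thm1}.
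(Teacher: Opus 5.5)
Your argument is correct and is essentially the paper's. For $u\ge 6$ you derive the corollary from Theorem~\ref{thm1} exactly as you show, using $|\theta_1|<\frac{1}{12u\log u}<\frac{1}{12u-1}$. On $[3,6]$ the paper simply computes $W(u)$ numerically and graphs $|W(u)/2\Phi(u)|$. Your phase-based shortcut there cannot avoid that computation: since $\frac{d}{du}\arg\Phi(u)\approx\eta>\pi$, the phase passes through several multiples of $\pi$ on $[3,6]$, and wherever $|\cos\arg\Phi|$ is near $1$ the bound $|\theta_1|<\frac{1}{2u(u-1)}$ is too weak. Moreover, Theorem~\ref{thm1} on that range is itself verified from the computed $W$, so the direct computation you fall back on is in effect the whole proof for $3\le u\le 6$.
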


To compute $\Phi(u)$, note that
$
J(s)=\Gamma(0,s),
$
the incomplete Gamma function (\texttt{Gamma[0,s]} in Mathematica, \texttt{incgam(0,s)} in PARI/GP), 
and
$
\zeta = \zeta(u) =-W_{1}\left(1/u\right),
$
where $W_{1}(s)$ is the $1$-branch
of the Lambert W function (\texttt{ProductLog[1,s]} in Mathematica, \texttt{lambertw(s,1)} in PARI/GP).
Table \ref{table1} shows numerical examples based on Theorems \ref{thm1} and \ref{thm2}.
\begin{table}[h] 
  \centering 
  \begin{tabular}{|r|l|l|r|} 
    \hline
    $u$ & $a$: Thm.~\ref{thm1} & $a$: Thm.~\ref{thm2} & $b$ \\ 
    \hline \hline
    6 & $+2.0$ & $+2.0$ & 7 \\ 
    \hline
10 & $-4.4$ & $-4.42$ & 14 \\ 
   \hline
    40 & $-1.40...$ & $-1.4080$ & 79 \\
    \hline
   100 & $+1.57...$ & $+1.57664$ & 241 \\
\hline
 1000 & $-1.7534...$ & $-1.7534550...$ & 3512 \\
\hline
 10000 & $+1.87930...$ & $+1.879306684...$ & 46123 \\
\hline
  \end{tabular}
   \caption{Values of $W(u) = a \cdot 10^{-b}$ from Theorems \ref{thm1} \& \ref{thm2}. 
Truncated values are shown with dots, others are rounded.} 
   \label{table1} 
\end{table}

Theorem \ref{thm2}, which is more precise than Theorem \ref{thm1}, involves the expression
$$
\Phi_2(u):=\Phi(u)(1+\alpha(u)),
$$
where 
\begin{equation*}
\begin{split}
\alpha(u)  := & \frac{1}{8} \frac{J^{(4)}(-\zeta)}{(J^{(2)}(-\zeta))^2} - \frac{5}{24} \frac{(J^{(3)}(-\zeta))^2}{(J^{(2)}(-\zeta))^3} \\
  = & -\frac{1}{12u} \cdot \frac{ \zeta^4-4 \zeta^3+\frac{13}{2} \zeta^2-2 \zeta+1}{ \zeta (\zeta-1)^3 }
 \sim -\frac{1}{12u}.
\end{split}
\end{equation*}
The derivatives $J^{(k)}(-\zeta)$ can easily be found from $J'(s)=-\frac{e^{-s}}{s}$ (see \eqref{eqJk}).

\begin{theorem}\label{thm2}
We have 
$$
W(u)=2 |\Phi_2(u)|\left\{\cos(\arg(\Phi_2(u)))+\theta_2(u)\right\},
$$
where $|\theta_2(u)|< 420 u^{-6}$ for $6 \le u\le 19$ and $|\theta_2(u)|< 0.005 u^{-2}$ for $u\ge 16$.
\end{theorem}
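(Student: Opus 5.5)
The plan is to derive Theorem \ref{thm2} from the same integral representation that underlies Theorem \ref{thm1}, carrying the saddle-point expansion one order further. Taking the Laplace transform of the delay equation $(u\omega(u))'=\omega(u-1)$ gives, up to an elementary factor, $\int_1^\infty \omega(u)e^{-su}\,du$ in terms of $\exp\{J(s)\}$, and inverting it gives
\[
\omega(u)=\frac{1}{2\pi i}\int_{c-i\infty}^{c+i\infty}\exp\{us+J(s)\}\,\frac{ds}{s}\cdot(\text{elementary factor}).
\]
I would shift the contour to the left past $s=0$. The residue there produces $e^{-\gamma}$, using $J(s)=-\gamma-\log s+O(s)$. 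The remaining integral runs through the two conjugate saddle points $s=-\zeta$ and $s=-\bar\zeta$, where $u+J'(s)=0$, i.e.\ $e^{\zeta}=-u\zeta$. By symmetry, $W(u)=2\,\mathrm{Re}\,I(u)$, where $I(u)$ is the contribution of a vertical segment through $-\zeta$ together with tails that are bounded separately.

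On the segment $s=-\zeta+it$ I would write
\[
us+J(s)=-u\zeta+J(-\zeta)-\tfrac12 J''(-\zeta)t^2+R(t),
\qquad
R(t)=\sum_{k\ge3}\frac{J^{(k)}(-\zeta)(it)^k}{k!}.
\]
The derivatives come from $J'(s)=-e^{-s}/s$ and \eqref{eqJk}, and each satisfies $J^{(k)}(-\zeta)\asymp u$. I would then expand $e^{R(t)}=1+R_3+R_4+\tfrac12R_3^2+E(t)$, where $R_3$ and $R_4$ are the cubic and quartic terms. Integrating term by term against the Gaussian, the odd terms vanish. The $t^4$ and $t^6$ moments give exactly
\[
1+\frac18\frac{J^{(4)}}{(J^{(2)})^2}-\frac5{24}\frac{(J^{(3)})^2}{(J^{(2)})^3}=1+\alpha(u),
\]
and the leading factor is $\Phi(u)$. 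This yields $I(u)=\Phi_2(u)+\Phi(u)\,\epsilon(u)$. Since $2\,\mathrm{Re}\,\Phi_2=2|\Phi_2|\cos\arg\Phi_2$, it follows that $\theta_2=\mathrm{Re}(\Phi\epsilon)/|\Phi_2|$, and it suffices to show $|\epsilon(u)|\,|1+\alpha(u)|^{-1}<0.005u^{-2}$.

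The error $\epsilon$ has three parts. The first is the next Edgeworth terms ($t^5$, $t^6$, $t^8$, \ldots). These have relative size $u^{-2}$, with explicit constants obtained from Cauchy estimates for $J^{(k)}(-\zeta)$ on a disc of radius about $1$ or $|\zeta|/2$ around $-\zeta$, where $|e^{-s}/s|$ is controlled. The second is the Taylor remainder $E(t)$ for $|t|\le T$, bounded via $|e^w-\sum_{j<m}w^j/j!|\le |w|^m e^{|w|}/m!$ together with a lower bound for $\mathrm{Re}\,J''(-\zeta)$ that keeps the Gaussian dominant. The third is the tails $|t|>T$ and the horizontal connectors. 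For these I would use $\mathrm{Re}\,J(-\zeta+it)-\mathrm{Re}\,J(-\zeta)\le -c\,u\min(t^2,1)$, obtained from $\mathrm{Re}\,J'$ along the line, together with the trivial bound $|J(s)|\ll 1/|s|$ far out. Choosing $T\asymp u^{-1/2}\log u$ makes the tails smaller than any power. The explicit constant $0.005$ then comes from $u\ge16$, using the explicit location of $\zeta$ from Lemma \ref{lemzeta}.

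For $6\le u\le19$ the asymptotic bookkeeping is too lossy to give $420u^{-6}$, and I expect this range to be the main obstacle. Here I would not rely on the expansion. Instead I would compute $\omega(u)$ rigorously, by interval arithmetic on the delay equation piece by piece, or via the same contour integral evaluated numerically with certified quadrature. I would then compare with $\Phi_2(u)$, computed from \texttt{Gamma[0,s]} and the Lambert $W$ function, on a fine grid. Between grid points I would use derivative bounds ($|W'|$ from the delay equation and $|\Phi_2'|$ from explicit differentiation), so that the discrepancy $\theta_2(u)$ is certified below $420u^{-6}$ on the whole interval.
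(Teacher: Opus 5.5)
Your framework matches the paper's: inverse Laplace transform of $\widehat{\omega}=e^{J}-1$, the line $\mathrm{Re}\,s=-\mu$ through $-\zeta$ and $-\bar\zeta$, $W=2\,\mathrm{Re}\,W^+$, and Gaussian moments producing $\Phi(u)(1+\alpha(u))$. The gap is that your error analysis cannot deliver the constant $0.005$.

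The dominant part of $\theta_2$ is the next term of the expansion, which the paper computes exactly:
\[
\beta(u)=-\frac{J_6}{48J_2^3}+\frac{7J_3J_5}{48J_2^4}+\frac{35J_4^2}{384J_2^4}-\frac{35J_3^2J_4}{64J_2^5}+\frac{385J_3^4}{1152J_2^6}\sim\frac{1}{288u^2}.
\]
Since $J_k\approx(-1)^k u$, these five terms are, up to sign, $\frac{24}{1152},\frac{168}{1152},\frac{105}{1152},\frac{630}{1152},\frac{385}{1152}$ times $u^{-2}$. They cancel down to $\frac{4}{1152}=\frac{1}{288}\approx 0.0035$. The target $0.005$ exceeds this by a factor of only about $1.44$. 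Bounding the ``next Edgeworth terms'' in absolute value gives about $1.14\,u^{-2}$, more than $200$ times too large. That holds even with the exact $|J_k|$ from \eqref{eqJk}, let alone Cauchy estimates. So you must push the expansion one order further: the paper Taylor-expands $J(-\zeta+i\tau)$ to order $7$ with an order-$8$ remainder, and $e^M$ to order $5$. You then identify $\beta(u)$ exactly and bound only the genuinely $O(u^{-3})$ remainder, arriving at $|\theta_2|\le(|\beta|+|\lambda|u^{-3})/|1+\alpha|$.

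This also breaks your split of the range. With a realistic explicit constant (the paper gets $|\lambda|<8.36$), the condition $\frac{1}{288}+\frac{|\lambda|}{u}<0.005\,(1-\frac{1}{10u})$ needs $u\ge 5500$ or so. Even granting $|\beta|\le\frac{1}{288u^2}$, at $u=16$ you would need $|\lambda|<0.025$, which no bounding argument will give. Several intermediate steps are also only established for large $u$: $\mathrm{Re}\,M\le 1/4$ for $u\ge 860$, $\delta\le 1/4$ for $u\ge 870$, and $L_k\le L_3$ and the first case of Lemma \ref{lemReJk} for $u\ge 10^3$. Hence your claim that the asymptotic bookkeeping yields $0.005u^{-2}$ from $u\ge 16$ is unsupported. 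Your certified numerics must cover all of $6\le u\le 5500$, not just $6\le u\le 19$. The paper uses Taylor-series integration of the delay equation for $u\le 50$, and quadrature of the truncated contour integral with explicit truncation errors for $50\le u\le 5500$.

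There are two smaller errors. First, the tail inequality $\mathrm{Re}\,J(-\zeta+it)-\mathrm{Re}\,J(-\zeta)\le -cu\min(t^2,1)$ is false for fixed $c>0$ and large $u$. At $t=2\pi k$ and $s=-\zeta+it$ one has $e^{-s}/s=u/(1-2\pi k i/\zeta)$, so the real part drops only by $\asymp u/\mu^2$. The correct bound is the $-8u/\log^2 u$ of Lemma \ref{lemReJk}; it still suffices, since it beats the interval length $\asymp u\log u$. Second, the inversion integrand is $(e^{J(s)}-1)e^{us}$ with no factor $1/s$. With that factor the pole at $0$ would be double and the residue would not be $e^{-\gamma}$. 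The $-1$ term also needs its own elementary bound.
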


Figure \ref{fig1} shows the graphs of $\theta_1(u)$ from Theorem \ref{thm1}, 
 $\theta_2(u)$ from Theorem \ref{thm2}, 
and the bounds $\pm \frac{ 1}{12u\log u}$ from Theorem \ref{thm1}, for $6< u \le 12$. 
Theorem \ref{thm3} below exhibits an even smaller error term than Theorem \ref{thm2}, at the expense of a more 
complicated main term.

\begin{figure}[htbp]
    \centering
    \includegraphics[width=0.92\textwidth]{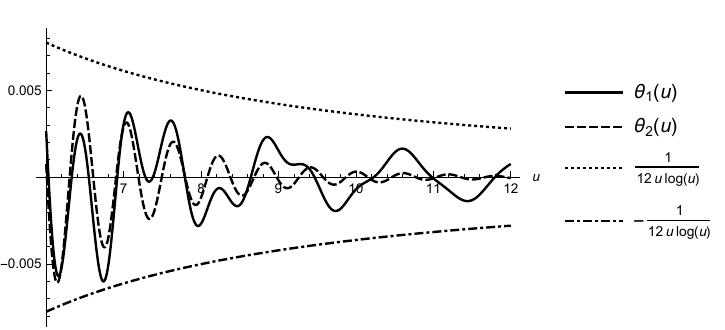}
    \caption{$\theta_1(u)$ and $\theta_2(u)$ for $6\le u \le 12$.}
    \label{fig1}
\end{figure}

Our theorems and their proofs mirror those in \cite{Wein}, where we found explicit estimates for 
Dickman's function $\rho(u)$. The definitions of $\Phi(u)$ and $\alpha(u)$ are analogous to 
the definitions of $\tilde{\rho}(u)$ and $\alpha(u)$ in  \cite{Wein},
with $J^{(k)}(-\zeta)$ replacing $I^{(k)}(\xi)$.
A key difference is that $\zeta$, $J^{(k)}(-\zeta)$ and $\Phi(u)$ are complex, 
while their counterparts $\xi$,  $I^{(k)}(\xi)$ and $\tilde{\rho}(u)$ in \cite{Wein} are real valued. 

Hildebrand \cite{Hil} derives \eqref{HilEq} by taking the limit of an asymptotic estimate for the solution to
$uf'(u)= \kappa(f(u)-f(u-1))$, as $\kappa \to -1^+$.
Tenenbaum \cite{Ten99} shows how \eqref{HilEq} can be deduced from \cite{HT}, where 
the asymptotic behavior of solutions to the general equation $u f'(u)+af(u)+bf(u-1)=0$
is described.

\section{Proof of Theorem \ref{thm2}}

Like Hildebrand \cite{Hil}, 
we use the saddle point method, although with a slightly different definition of $\zeta$.
Our choice of $\zeta$ ensures that, when evaluating the inverse Laplace integral,
the integrand $e^{J(s)+us}$ has a vanishing derivative exactly at $s=-\zeta$. 
Unlike in \cite{Hil}, we keep track of all constant factors 
and use a Taylor polynomial of order seven instead of three near the saddle point. 
For small values of $u$, we verify the result by calculating $W(u)$ numerically as described in Section \ref{SecNum}.

\subsection{Auxiliary results}

\begin{lemma}\label{lemlap}
The Laplace transform of $\omega(u)$, defined by $\widehat{\omega}(s):=\int_0^\infty \omega(u)e^{-us}du$ for $\mathrm{Re}(s) >0$, 
extends to a meromorphic function on $\mathbb{C}$ by $\widehat{\omega}(s)=e^{J(s)}-1$, with its only pole being a simple pole at the origin
with residue $e^{-\gamma}$. 
\end{lemma}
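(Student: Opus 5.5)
The plan is to compute $\widehat{\omega}(s)$ for real $s>0$ directly from the delay differential equation, then extend by analytic continuation. Since $\omega(u)=0$ for $u<1$ (the standard convention implicit in the Laplace integral from $0$), we have $u\omega(u)=1$ on $[1,2]$. The equation $(u\omega(u))'=\omega(u-1)$ holds for $u>2$. Set $g(u):=u\omega(u)\,\mathbf{1}_{u\ge 1}$. Then in the distributional sense
$$g'(u)=\delta_1(u)+\omega(u-1)\mathbf{1}_{u>2}=\delta_1(u)+\omega(u-1),$$
because $\omega(u-1)=0$ for $u<2$ anyway. Taking Laplace transforms, with $g(0)=0$, gives
$$s\,\widehat g(s)=e^{-s}+e^{-s}\widehat{\omega}(s).$$
Since $\widehat g(s)=-\widehat{\omega}'(s)$, we obtain the linear ODE
$$-s\,\widehat{\omega}'(s)=e^{-s}\bigl(1+\widehat{\omega}(s)\bigr).$$
Put $F=1+\widehat{\omega}$; then $F'/F=-e^{-s}/s=J'(s)$. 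Hence $F(s)=Ce^{J(s)}$, and letting $s\to+\infty$ (where $\widehat{\omega}\to 0$ and $J\to 0$) forces $C=1$. So $\widehat{\omega}(s)=e^{J(s)}-1$ for real $s>0$, and by uniqueness of analytic continuation also for $\mathrm{Re}(s)>0$. Convergence of the Laplace integral for $\mathrm{Re}\, s>0$ is clear since $0\le\omega\le 1$, which follows from the equation by induction.

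For the meromorphic extension, use $J(s)=-\gamma-\log s+E_1^*(s)$, where $E_1^*(s)=\sum_{n\ge1}(-1)^{n+1}s^n/(n\cdot n!)$ is entire. This is the classical expansion of $\int_s^\infty e^{-z}z^{-1}dz$, obtained by comparing derivatives and evaluating the constant via $\int_0^\infty e^{-z}\log z\,dz=-\gamma$. Then
$$e^{J(s)}-1=\frac{e^{-\gamma}}{s}\,e^{E_1^*(s)}-1.$$
The right side is meromorphic on $\mathbb{C}$, with the branch cut of $J$ disappearing after exponentiation since the jump is $2\pi i$. Its only possible pole is at $s=0$, and that pole is simple with residue $e^{-\gamma}e^{E_1^*(0)}=e^{-\gamma}$.

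The main obstacle is the limiting argument fixing $C=1$ together with justifying differentiation under the integral. Both are routine because $\omega$ is bounded: $\widehat{\omega}(s)\le 1/s\to 0$ as $s\to\infty$. The constant $\gamma$ in the expansion of $J$ near $0$ is standard.
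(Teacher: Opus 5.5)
Your proof is correct. The paper gives no argument for this lemma; it only cites Tenenbaum's Theorem~III.6.7. Your proposal replaces that reference with a self-contained derivation from the delay equation:
\begin{itemize}
\item you encode $(u\omega(u))'=\omega(u-1)$, together with the jump of $u\omega(u)\mathbf{1}_{u\ge 1}$ at $u=1$, as the first-order equation $-s\widehat{\omega}'(s)=e^{-s}(1+\widehat{\omega}(s))$;
\item you solve it as $1+\widehat{\omega}=Ce^{J}$, using $J'(s)=-e^{-s}/s$, and fix $C=1$ from $\widehat{\omega}(s)\le 1/s\to 0$;
\item you get the meromorphic continuation from $J(s)=-\gamma-\log s+E_1^*(s)$ with $E_1^*$ entire.
\end{itemize}
The division by $F$ is harmless, since $F\ge 1$ for real $s>0$; equivalently, $(Fe^{-J})'=0$.

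The citation is more economical. Your version makes explicit two things the paper leaves tacit:
\begin{itemize}
\item the convention $\omega=0$ on $[0,1)$, which the Laplace integral from $0$ requires for the stated formula;
\item why the $2\pi i$ jump of $J$ across $(-\infty,0]$ disappears after exponentiation, which matches the paper's convention of extending $J$ from the upper half-plane.
\end{itemize}
The expansion of $J$ near $0$ that you use is the same identity (Tenenbaum's (III.5.44)) that the paper later differentiates in the proof of Lemma~\ref{lemJk}. So your argument relies on nothing beyond what the paper already uses.
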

\begin{proof}
See Tenenbaum \cite[Thm.~III.6.7]{Ten}.
\end{proof}

\begin{lemma}\label{lemzeta}
For $u\ge 1$, the equation $e^\zeta=-u\zeta$ has a unique solution $\zeta=\mu-\eta i$ in the range $\mu \ge 1$ and $ 0< \eta <3\pi/2$.
This solution satisfies
$$
\log(u\log u)< \mu < \log(u\log u) + \frac{2+\log_2 u}{\log u} \qquad (u\ge 2),
$$
$$
\log(u\log u)< \mu < \log(u\log u) +0.6 \qquad (u\ge 50),
$$
and
$$ \pi <\eta < \pi +  \frac{3 \pi}{2\log u}  \qquad (u > 1).$$
\end{lemma}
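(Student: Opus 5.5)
Write $\zeta=\mu-\eta i$ and split $e^\zeta=-u\zeta$ into modulus and argument equations:
$$e^\mu=u|\zeta|=u\sqrt{\mu^2+\eta^2},$$
$$\eta \equiv \pi+\arg(\mu-\eta i)\pmod{2\pi}.$$
Since $\arg(\mu-\eta i)=-\arctan(\eta/\mu)$, the branch compatible with $0<\eta<3\pi/2$ and $\mu\ge1$ gives the real system
$$\eta=\pi-\arctan(\eta/\mu)\ \ \text{or}\ \ \eta=\pi+\arctan(\eta/\mu)\ \ (\text{fixed by the sign convention}),\qquad \mu=\log u+\tfrac12\log(\mu^2+\eta^2).$$
I will check the sign carefully. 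With $\zeta=\mu-\eta i$ we have $e^\zeta=e^\mu e^{-i\eta}$, and $-u\zeta=u(-\mu+\eta i)$ has argument $\pi-\arctan(\eta/\mu)$. Equating arguments gives $-\eta\equiv\pi-\arctan(\eta/\mu)$, so $\eta\equiv \arctan(\eta/\mu)-\pi$. The representative in $(0,3\pi/2)$ is
$$\eta=\pi+\arctan(\eta/\mu),$$
which is consistent with the claim $\eta>\pi$.

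For existence and uniqueness, I would treat this as a two-variable fixed-point problem. For fixed $\mu\ge1$, the map $\eta\mapsto \pi+\arctan(\eta/\mu)$ sends $[\pi,3\pi/2)$ into itself. Its derivative is $\mu/(\mu^2+\eta^2)\le 1/(1+\pi^2)<1$, so it is a contraction with a unique fixed point $\eta(\mu)\in(\pi,3\pi/2)$. Any solution with $0<\eta<3\pi/2$ must be this one, since the other residues of the argument equation lie outside that interval because $\arctan\in(0,\pi/2)$.

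Next substitute into $F(\mu):=\mu-\tfrac12\log(\mu^2+\eta(\mu)^2)-\log u=0$. We have $d\eta/d\mu<0$, and
$$F'(\mu)=1-\frac{\mu+\eta\eta'}{\mu^2+\eta^2}.$$
Since $\eta\eta'\le0$ and $\mu/(\mu^2+\eta^2)\le 1/(1+\pi^2)$, this gives $F'>0$. Also $F(1)\le 1-\tfrac12\log(1+\pi^2)-\log u<0$ for $u\ge1$, and $F\to\infty$. So there is a unique root $\mu\ge1$, which proves the uniqueness statement. The main care needed here is the branch bookkeeping, i.e.\ excluding other branches with $\eta\in(0,\pi]$. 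The argument identity rules these out directly: $\eta-\pi=\arctan(\eta/\mu)>0$.

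The bounds then follow from the two equations.

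\textbf{Bounds on $\eta$.} First, $\eta>\pi$ because $\arctan(\eta/\mu)>0$. Second, $\eta-\pi=\arctan(\eta/\mu)<\eta/\mu<3\pi/(2\mu)$. Since $\mu>\log u$ (because $|\zeta|>1$), this gives $\eta<\pi+3\pi/(2\log u)$ for $u>1$.

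\textbf{Lower bound on $\mu$.} From $e^\mu=u|\zeta|>u\mu$ we get $\mu>\log u+\log\mu$. The function $x-\log x$ is increasing for $x\ge1$, and $x=\log(u\log u)$ satisfies $x-\log x=\log u+\log\log u-\log\log(u\log u)<\log u$. Hence $\mu>\log(u\log u)$.

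\textbf{Upper bound on $\mu$.} I would write $\mu=L+\delta$ with $L=\log(u\log u)$. Then
$$\delta=\log\frac{\mu}{\log u}+\tfrac12\log\Bigl(1+\frac{\eta^2}{\mu^2}\Bigr).$$
The second term is at most $\eta^2/(2\mu^2)\le (3\pi/2)^2/(2\log^2u)$, and the first term is $\log(1+(\log_2u+\delta)/\log u)$. Bounding $\log(1+x)<x$ yields
$$\delta<\frac{\log_2 u+\delta+c/\log u}{\log u},\qquad\text{so}\qquad \delta\bigl(1-1/\log u\bigr)<\frac{\log_2u+c/\log u}{\log u}.$$
This rearrangement is only useful when $\log u>1$. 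So I expect the hardest part to be obtaining the clean constant $2$ uniformly for all $u\ge2$, since for $u$ near 2 the factor $1-1/\log u$ is small or negative. For small $u$ I plan to instead use monotonicity together with a direct evaluation at a few points, or a cruder bound $\delta\le\log(\mu/\log u)+\ldots$ using $\mu\le$ an explicit value. The bound $0.6$ for $u\ge50$ follows by the same inequality, evaluating the right side at $u=50$ and noting that it decreases in $u$.
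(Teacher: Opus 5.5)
\textbf{Gap: the two explicit upper bounds for $\mu$.} The rest of your argument is fine: the reduction to $\eta=\pi+\arctan(\eta/\mu)$ and $e^\mu=u|\zeta|$, existence and uniqueness, and the bounds on $\eta$. (The paper itself only refers to Hildebrand's Lemma~4.)

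\begin{enumerate}
\item Your linearized inequality is $\delta<(\log_2u+c/\log u)/(\log u-1)$ with $c=(3\pi/2)^2/2\approx 11.1$. It gives $\delta<(2+\log_2u)/\log u$ exactly when $c+2+\log_2u<2\log u$, i.e.\ only for $u\gtrsim 2000$. For $2\le u\lesssim 2000$ you offer only ``direct evaluation at a few points''. Without a stated monotonicity device, finitely many evaluations prove nothing on a continuum.
\item The claim that the bound $0.6$ for $u\ge50$ follows ``by the same inequality, evaluating the right side at $u=50$'' is false. At $u=50$ your right side is about $1.44$. Even $(2+\log_2 50)/\log 50\approx 0.86$, while the true value of $\mu-\log(u\log u)$ is about $0.57$. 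The step $\log(1+x)<x$ alone loses about $0.09$ there ($x\approx0.49$, $\log(1+x)\approx0.40$). The crude bound $\eta^2/\mu^2\le(3\pi/2)^2/\log^2u$ loses more. So no version of that linearization reaches $0.6$ near $u=50$.
\end{enumerate}

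\textbf{Repair.} Use the monotonicity of $F$ you already proved: $\mu<X$ if and only if $F(X)>0$. With $L=\log(u\log u)$, so that $e^L=u\log u$, this reads $e^{X-L}\log u>\sqrt{X^2+\eta(X)^2}$, where $\eta(X)=\pi+\arctan(\eta(X)/X)<\pi+\arctan\bigl(3\pi/(2X)\bigr)$.
\begin{enumerate}
\item For the $0.6$ bound, take $X=L+0.6$. For $u\ge50$ we have $X\ge 5.87$ and $\eta(X)<3.82$, so it suffices that $e^{0.6}\log u>\sqrt{(L+0.6)^2+3.82^2}$. This holds at $u=50$ ($7.13>7.01$). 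The difference of the two sides is increasing in $\log u$, since its derivative is at least $e^{0.6}-1-1/\log u>0$.
\item For the first bound, take $X=L+(2+\log_2u)/\log u$. For large $u$ an elementary expansion suffices. For small $u$ you need a finite check made rigorous by monotonicity. First, $\mu(u)$ is increasing, because $F$ increases in $\mu$ and decreases in $u$. Second, $R(u)=\log(u\log u)+(2+\log_2u)/\log u$ is increasing for $u\ge2$. So on a mesh $u_0<u_1<\dots$ it suffices to check $\mu(u_{i+1})<R(u_i)$.
\end{enumerate}

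\textbf{Two minor slips.}
\begin{enumerate}
\item $\mu/(\mu^2+\eta^2)$ can reach $1/(2\pi)$ (at $\mu=\pi$), not only $1/(1+\pi^2)$. This is harmless, since it is still less than $1$.
\item Your step $x_0-\log x_0<\log u$ for $x_0=\log(u\log u)$ requires $u\ge e$. For $2\le u<e$, simply note $\log(u\log u)<\log u<\mu$.
\end{enumerate}
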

\begin{proof}
This follows from the same reasoning as in the proof of \cite[Lem.~4]{Hil},
where a more precise estimate is established for a slightly different $\zeta$. 
\end{proof}

\begin{lemma}\label{lemJ}
For $|\mathrm{Im}(s)| \ge 3$, we have
\begin{equation}\label{eqlemJ}
\left|J(s)-\frac{e^{-s}}{s}\left(1-\frac{1}{s}+\frac{2}{s^2}\right) \right| < 10 \left|\frac{e^{-s}}{s^4}\right|.
\end{equation}
For $\zeta$ as in Lemma \ref{lemzeta}, we have 
$$
\left|J(-\zeta)-u(1+\zeta^{-1}+2\zeta^{-2})\right| < 10 u |\zeta|^{-3} \qquad (u\ge 1)
$$
and
$$
\mathrm{Re}(J(-\zeta)) \ge u \qquad (u\ge  2).
$$
\end{lemma}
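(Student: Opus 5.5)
For the first inequality \eqref{eqlemJ} I will start from the integral representation along a horizontal ray, $J(s)=\int_0^\infty \frac{e^{-s-t}}{s+t}\,dt$. This is valid for $s$ off the negative real axis; it also extends to the boundary values there, although with $|\mathrm{Im}(s)|\ge 3$ we never meet that axis. The plan is to integrate by parts three times. Expanding $\frac{1}{s+t}$ in the standard way gives
$$J(s)=e^{-s}\Bigl(\frac1s-\frac1{s^2}+\frac{2}{s^3}\Bigr)-6\int_0^\infty \frac{e^{-s-t}}{(s+t)^4}\,dt .$$
Now write $s=\sigma+i\tau$ with $|\tau|\ge3$. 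The remainder is bounded by
$$6|e^{-s}|\int_0^\infty \frac{e^{-t}}{|s+t|^4}\,dt .$$
If $\sigma\ge0$, then $|s+t|\ge|s|$ and the bound is $6|e^{-s}|/|s|^4$. If $\sigma<0$, then $|s+t|$ can be smaller than $|s|$. We still have $|s+t|\ge|\tau|\ge3$, but we need the comparison with $|s|^4$. For that I split the integral at $t=|\sigma|$. For $t\ge|\sigma|$ we have $|s+t|\ge|\tau|$ and the weight $e^{-t}\le e^{-|\sigma|}$. For $t<|\sigma|$ I use $|s+t|^2=(\sigma+t)^2+\tau^2$ and compare $\frac{|s|^4}{|s+t|^4}$ with $e^{t}$: the logarithm of the ratio grows at most like $4t/|\tau|\cdot(\text{const})$, which is at most about $t\cdot 4/3$. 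This makes the integral $\int e^{-t}\,|s|^4/|s+t|^4\,dt$ bounded by a constant $\le 10/6$. This elementary but fiddly estimate is the main obstacle. If the crude version is too weak, I will instead rotate the contour, integrating along $t\mapsto s+te^{i\phi}$ toward a direction away from the origin so that $|s+te^{i\phi}|\ge|s|$ holds. The cost is the factor $1/\cos\phi$; with $|\tau|\ge3$ and a choice of $\phi$ depending on the sign of $\tau$, this stays under $10/6$.

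For the second statement I apply \eqref{eqlemJ} with $s=-\zeta$. By Lemma \ref{lemzeta} we have $\eta>\pi>3$, so $|\mathrm{Im}(-\zeta)|\ge3$. Since $e^{\zeta}=-u\zeta$, we get $e^{-s}/s=e^{\zeta}/(-\zeta)=u$. Hence the main term becomes $u(1+\zeta^{-1}+2\zeta^{-2})$ and the error becomes $10u|\zeta|^{-3}$, exactly as claimed.

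For $\mathrm{Re}\,J(-\zeta)\ge u$ when $u\ge2$, I use the previous estimate:
$$\mathrm{Re}\,J(-\zeta)\ge u\Bigl(1+\mathrm{Re}(\zeta^{-1}+2\zeta^{-2})-10|\zeta|^{-3}\Bigr).$$
With $\zeta=\mu-i\eta$, $\mu\ge1$ and $\eta\in(\pi,5\pi/2)$, we have $\mathrm{Re}\,\zeta^{-1}=\mu/|\zeta|^2$ and $\mathrm{Re}\,\zeta^{-2}=(\mu^2-\eta^2)/|\zeta|^4$. So the requirement becomes
$$\mu|\zeta|^2+2(\mu^2-\eta^2)\ge 10|\zeta| .$$
For large $\mu$ this is clear. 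For $u$ near $2$, $\mu$ is only about $\log(2\log2)+\dots$, and the inequality may fail with these crude constants. In that range I will check it directly from the numerically computed $\zeta(u)$ on a finite interval, using the monotonic bounds from Lemma \ref{lemzeta}. As a fallback, I can use the exact form $J(s)=e^{-s}(1/s-1/s^2)+2\int_0^\infty e^{-s-t}(s+t)^{-3}dt$, which has a smaller remainder constant.
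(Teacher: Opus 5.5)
\textbf{There is a genuine gap in the first inequality, in the case $\mathrm{Re}\, s<0$.} Your other steps agree with the paper: the integration-by-parts identity, the reduction to $K(s)=\int_0^\infty e^{-t}|s+t|^{-4}\,dt$, and the case $\mathrm{Re}\, s\ge 0$. So do your deductions of the second and third claims: you substitute $s=-\zeta$ with $e^{-s}/s=u$, then check $\mu|\zeta|^2+2(\mu^2-\eta^2)\ge 10|\zeta|$ for large $u$ and numerically for small $u$. (Lemma \ref{lemzeta} gives $\eta<3\pi/2$, not $5\pi/2$.)

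The bound you aim for, $\int_0^\infty e^{-t}|s|^4|s+t|^{-4}\,dt\le 10/6$, is false. At $s=-5+3i$ the integral equals $1156\,e^{-5}\int_{-5}^\infty e^{-v}(v^2+9)^{-2}\,dv\approx 2.3$. At $s=-3+3i$ it is about $1.89$. So $6|e^{-s}|K(s)$ exceeds $10|e^{-s}s^{-4}|$ at these points. No argument that puts absolute values inside the horizontal-ray integral can give the constant $10$ there; you must use the cancellation from the rotating phase of $(s+t)^{-4}$.

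Your specific estimate would not come close anyway. The $t$-derivative of $\log(|s|^4/|s+t|^4)$ is $4(x-t)/((x-t)^2+\tau^2)\le 2/|\tau|\le 2/3$. That yields only $\int_0^x e^{-t/3}\,dt<3$, plus up to $1$ from $t\ge x$. With your stated $4/3$ the integrand would even grow.

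\textbf{The rotated-contour fallback does not close this gap either.} It does exploit analyticity. However, requiring $|s+te^{i\phi}|\ge|s|$ for all $t\ge 0$ forces $\tan\phi\ge|\mathrm{Re}\, s|/|\mathrm{Im}\, s|$, hence $1/\cos\phi\ge|s|/|\mathrm{Im}\, s|$. This exceeds $10/6$ as soon as $|\mathrm{Re}\, s|>\tfrac43|\mathrm{Im}\, s|$. It is about $1.94$ at $s=-5+3i$, and it is unbounded as $\mathrm{Re}\, s\to-\infty$ along $\mathrm{Im}\, s=3$. So the region where $|\mathrm{Im}\, s|$ is near $3$ and $\mathrm{Re}\, s$ is moderately negative is covered by neither of your arguments.

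\textbf{How the paper handles this region.} Write $s=-x+iy$.
\begin{enumerate}
\item For $x\ge 200$ it splits the integral at $t=0.1x$ rather than at $t=x$. This gives $|s|^4K(s)<0.9^{-4}+(1+x^2/9)^2e^{-0.1x}<1.6$.
\item For $0\le x\le 200$ and $|y|\ge 400$ it uses $|s|^4\le(5/4)^2y^4<1.6|s+t|^4$.
\item On the remaining compact rectangle $0\le x\le 200$, $3\le |y|\le 400$, it checks numerically, with a contour plot, the exact quotient of the two sides of \eqref{eqlemJ}. This automatically captures the cancellation.
\end{enumerate}
To complete your proof you need a substitute for the last step. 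One option is the same numerical verification. Another is a genuinely sharper analytic estimate on that rectangle, for instance bounding $\int_0^\infty e^{-r\cos\phi}|s|^4|s+re^{i\phi}|^{-4}\,dr$ directly for a well-chosen $\phi$, rather than by the crude $1/\cos\phi$.
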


\begin{proof}
The second claim follows from the first with $s=-\zeta$. 
The third claim follows from the second for $u\ge 80$, and by graphing for $u\le 80$. 
Repeated integration by parts shows that 
$$
\left|J(s)-\frac{e^{-s}}{s}\left(1-\frac{1}{s}+\frac{2}{s^2}\right) \right| 
\le 6 |e^{-s}| \int_0^\infty \frac{e^{-t}}{|s+t|^4}dt
=:  6 |e^{-s}|   K(s),
$$
say. If $\mathrm{Re}(s)\ge 0$, then $|s+t|\ge |s|$ and $K(s)\le |s|^{-4}$. 
Assume $\mathrm{Re}(s)\le 0$ and write $s=-x+iy$ with $x\ge 0$ and $|y|\ge 3$. 
For $0<a<1$ we have
$$
K(s) \le \int_0^{ax}\frac{e^{-t}}{((1-a)^2 x^2+y^2)^2} dt + \int_{ax}^\infty \frac{e^{-t}}{y^4}dt<\frac{1}{(1-a)^4|s|^4}+\frac{e^{-ax}}{y^4},
$$
so
$$
K(s) < \frac{1}{|s|^4} \left(\frac{1}{(1-a)^4}+ \left(1+\frac{x^2}{3^2}\right)^2 e^{-ax}\right) =: \frac{1}{|s|^4} F(x),
$$
say. Letting $a=0.1$, we find that $F(x)<1.6$ for $x\ge 200$, which is acceptable. 
If $0\le x \le 200$ and $|y|\ge 400$, then $x^2 \le y^2/4$ and 
$$
|s|^4 =(x^2+y^2)^2 \le y^4 (5/4)^2 \le |s+t|^4 (5/4)^2< 1.6 |s+t|^4,
$$
which implies $K(s)\le 1.6|s|^{-4}$. 
In the rectangle $0\le x \le 200$, $3\le y \le 400$, we use a contour plot 
of the quotient of the two sides of \eqref{eqlemJ} to verify the result. 
\end{proof}

\begin{lemma}\label{lemJk}
For $k \in \mathbb{N}$ and $s\in \mathbb{C}\setminus \{0\}$, we have 
$$
J^{(k)}(s) = (-1)^{k} \frac{e^{-s}}{s} \sum_{j=0}^{k-1} \frac{1}{s^j} \frac{(k-1)!}{(k-1-j)!}.
$$
For $k \in \mathbb{N}$, $u\ge 1$ and $\zeta$ as in Lemma \ref{lemzeta}, we have
\begin{equation}\label{eqJk}
J^{(k)}(-\zeta) = (-1)^{k} u \sum_{j=0}^{k-1} \frac{(-1)^j}{\zeta^j} \frac{(k-1)!}{(k-1-j)!}.
\end{equation}
With $\mu=\mathrm{Re}(\zeta)$, we have
$$
|J^{(2)}(-\zeta)| \le u \qquad (u\ge 1),
$$
$$
\mathrm{Re}(J^{(2)}(-\zeta)) \ge u (1-\mu^{-1}) \qquad (u\ge 1),
$$
$$
-\mathrm{Re}(J^{(3)}(-\zeta)) \ge u (1-2\mu^{-1}) \qquad (u\ge 1),
$$
and, for $3\le k \le 8$, $\tau \in \mathbb{R}$ and $u\ge 58$, 
\begin{equation}\label{eqLub}
|J^{(k)}(-\zeta+i\tau)| \le  u \left(1-\frac{k-1}{\mu}+\frac{(k-1)(k-2)+7}{\mu^2}\right).
\end{equation}
\end{lemma}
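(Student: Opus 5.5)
We prove the formula for $J^{(k)}(s)$ by induction on $k$, starting from $J'(s)=-e^{-s}/s$. Suppose $J^{(k)}(s)=(-1)^k e^{-s}P_k(1/s)/s$ with $P_k(x)=\sum_{j=0}^{k-1}\frac{(k-1)!}{(k-1-j)!}x^j$. Differentiating $e^{-s}s^{-j-1}$ gives $-e^{-s}s^{-j-1}-(j+1)e^{-s}s^{-j-2}$. Collecting the coefficient of $s^{-j-1}$ in $J^{(k+1)}$ then reduces the claim to the identity
$$\frac{(k-1)!}{(k-1-j)!}+j\frac{(k-1)!}{(k-j)!}=\frac{k!}{(k-j)!},$$
which holds because $(k-j)+j=k$. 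For \eqref{eqJk} we substitute $s=-\zeta$. Since $e^{\zeta}=-u\zeta$, we get $e^{-s}/s=e^{\zeta}/(-\zeta)=u$, and $s^{-j}=(-1)^j\zeta^{-j}$.

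For $k=2$, \eqref{eqJk} gives $J^{(2)}(-\zeta)=u(1-1/\zeta)$.

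\emph{Real part.} We have $\mathrm{Re}(1/\zeta)=\mu/|\zeta|^2\le 1/\mu$, so $\mathrm{Re}\,J^{(2)}(-\zeta)\ge u(1-\mu^{-1})$.

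\emph{Modulus.} We compute $|1-1/\zeta|^2=|\zeta-1|^2/|\zeta|^2=\bigl((\mu-1)^2+\eta^2\bigr)/(\mu^2+\eta^2)\le 1$, since $\mu\ge 1$ gives $(\mu-1)^2\le\mu^2$. Hence $|J^{(2)}(-\zeta)|\le u$.

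\emph{The $k=3$ bound.} Here $-J^{(3)}(-\zeta)=u(1-2/\zeta+2/\zeta^2)$. We use $\mathrm{Re}(1/\zeta)\le 1/\mu$ together with
$$\mathrm{Re}(1/\zeta^2)=\frac{\mu^2-\eta^2}{|\zeta|^4}.$$
If $\mu\ge\eta$, this is nonnegative and the bound follows at once. If $\mu<\eta$, it can be negative, and we compare instead with the gain in $-2\,\mathrm{Re}(1/\zeta)$. Indeed $2/\mu-2\mu/|\zeta|^2=2\eta^2/(\mu|\zeta|^2)$, and this dominates $2\eta^2/|\zeta|^4$ because $\mu\le|\zeta|^2$, which holds as $\mu\ge1$. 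So the bound holds in all cases with no numerics.

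\emph{The bound \eqref{eqLub}.} Write $s=-\zeta+i\tau$. From the explicit formula, $J^{(k)}(s)=(-1)^k\frac{e^{-s}}{s}\sum_j(\ldots)s^{-j}$. We have $|e^{-s}|=|e^{\zeta}|=u|\zeta|$ and $|s|=|\zeta-i\tau|\ge\mu$. However, $|e^{-s}/s|=u|\zeta|/|\zeta-i\tau|$, which may exceed $u$ when $|\zeta-i\tau|<|\zeta|$. The minimizing $\tau$ is $\tau=-\eta$, where $|s|=\mu$, giving a factor $|\zeta|/\mu\approx\sqrt{1+\pi^2/\mu^2}\approx 1+\pi^2/(2\mu^2)$.

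So write $w=1/s$ and bound
$$|J^{(k)}(s)|\le u\,|\zeta|\,\Bigl|\,w\sum_{j}\frac{(k-1)!}{(k-1-j)!}w^j\Bigr|.$$
With $|w|\le 1/\mu$ this is at most
$$u\,|\zeta|\,\mu^{-1}\Bigl(1+\frac{k-1}{\mu}+\cdots\Bigr).$$
This gives a plus sign on the $1/\mu$ term, but \eqref{eqLub} requires $1-(k-1)/\mu$. The sign must come from the structure: $e^{-s}$ and $\zeta$ are close to $-\mu$ directionally, since $\zeta\approx\mu-\pi i$. We therefore expand $|{-w}\sum_j c_j w^j|$ exactly in terms of $\mu$ and $\eta-\tau$, use $(k-1)/|s|$ with the correct phase, and show the worst case over $\tau$ is bounded by the claimed expression. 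The error terms use $|\zeta|^2=\mu^2+\eta^2$ and $\eta<\pi+3\pi/(2\log u)$ from Lemma \ref{lemzeta}, with $\mu>\log(u\log u)\ge 6$ for $u\ge58$.

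\textbf{Main obstacle.} The difficulty is this last uniform-in-$\tau$ estimate. The constant $7$ must absorb $\pi^2/2$ plus the correction from $\eta$. I would handle it by reducing to a one-variable maximization over $t=\eta-\tau$ for each $3\le k\le8$, and then checking the finitely many cases with explicit inequalities in $\mu\ge6$, $\pi<\eta<\pi+0.4$. If needed, this check can be supplemented by a numerical verification on a compact range, as the paper does for Lemma \ref{lemJ}.
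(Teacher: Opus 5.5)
Your induction for the closed form of $J^{(k)}(s)$, the substitution giving \eqref{eqJk}, and the three bounds for $J^{(2)}(-\zeta)$ and $J^{(3)}(-\zeta)$ are correct. The comparison $2\eta^2/(\mu|\zeta|^2)\ge 2\eta^2/|\zeta|^4$ settles the $k=3$ case cleanly and makes the case split unnecessary.

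The gap is \eqref{eqLub}, the substantive part of the lemma, which you do not prove. You rightly observe that the triangle inequality applied to $\frac{e^{-s}}{s}\sum_j\frac{(k-1)!}{(k-1-j)!}s^{-j}$ at $s=-\zeta+i\tau$ gives $1+\frac{k-1}{\mu}+\cdots$, with the wrong sign. What follows is only a plan:
\begin{itemize}
\item The remark that $e^{-s}$ and $\zeta$ are ``close to $-\mu$ directionally'' does not produce the minus sign.
\item The proposed maximization is not a check of finitely many cases. The variable $\tau$ ranges over all of $\mathbb{R}$; note $|s|^2=\mu^2+(\eta+\tau)^2$, so the natural variable is $\eta+\tau$, not $\eta-\tau$. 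Also $\mu$ ranges over an unbounded interval, and nothing in the sketch controls the maximum over $\tau$ uniformly in $\mu$.
\item Your numerical inputs are off at $u=58$. There $\log(u\log u)\approx 5.46<6$; the paper uses the actual value $\mu(58)\approx 6.01$. Also $\eta\approx 3.69>\pi+0.4$ there. The bound $\eta<\pi+\frac{3\pi}{2\log u}$ from Lemma \ref{lemzeta} only gives $\eta^2/2<9.3$ at $u=58$, which does not fit under the constant $7$.
\end{itemize}

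The missing idea is to bound $J^{(k)}$ not through the finite sum but through the identity $J(s)=-\gamma-\log s-I(-s)$, where $I(z)=\int_0^z\frac{e^t-1}{t}\,dt$ (the identity the paper quotes from Tenenbaum).
\begin{itemize}
\item Differentiating $k$ times gives $|J^{(k)}(s)|\le (k-1)!\,|s|^{-k}+|I^{(k)}(-s)|$, with $I^{(k)}(z)=\int_0^1 h^{k-1}e^{hz}\,dh$.
\item Since $\mathrm{Re}(\zeta-i\tau)=\mu$ for every real $\tau$, moving the modulus inside the integral yields $|I^{(k)}(\zeta-i\tau)|\le I^{(k)}(\mu)$ uniformly in $\tau$. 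Meanwhile $|s|\ge\mu$ makes the first term tiny.
\item Now the integrand is real and positive. Two integrations by parts, plus the trivial bound $I^{(k-2)}(\mu)<e^\mu/\mu$, give $I^{(k)}(\mu)\le\frac{e^\mu}{\mu}\bigl(1-\frac{k-1}{\mu}+\frac{(k-1)(k-2)}{\mu^2}\bigr)$. The correct sign on $(k-1)/\mu$ comes for free; the paper cites this bound from its earlier work on Dickman's function.
\item Combine this with $\frac{e^\mu}{\mu}=\frac{u|\zeta|}{\mu}\le u\bigl(1+\frac{\eta^2}{2\mu^2}\bigr)$; the product simplifies when $\mu\ge k-2$. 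Then $\frac{(k-1)!}{u\mu^{k-2}}+\frac{\eta^2}{2}<6.9$ for $u\ge 58$, using $\eta<3.7$, gives \eqref{eqLub}.
\end{itemize}

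In fact your sum satisfies
\[
\frac{e^{-s}}{s}\sum_{j}\frac{(k-1)!}{(k-1-j)!}s^{-j}=\frac{(k-1)!}{s^k}-\int_0^1h^{k-1}e^{-hs}\,dh .
\]
So the sign you were looking for is hidden in this integral. It survives the triangle inequality only when the inequality is applied to the integral rather than to the finite sum.
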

\begin{proof}
The first equation follows by induction on $k$ from $J'(s)=-e^{-s}/s$. 
Substituting $s=-\zeta$ yields \eqref{eqJk}.
The next three estimates are an easy consequence of \eqref{eqJk}.
To derive \eqref{eqLub}, we differentiate the identity \cite[Eq.~(III.5.44)]{Ten} $k$ times
and apply the triangle inequality to get
$$
|J^{(k)}(-\zeta+i\tau)| \le  \frac{(k-1)!}{|\zeta-i\tau|^k} + |I^{(k)}(\zeta-i\tau)| \qquad (k\ge 1),
$$
where
$$
|I^{(k)}(\zeta-i\tau)| := |\int_0^1 h^{k-1} e^{h(\zeta-i\tau)} dh |\le \int_0^1 h^{k-1} e^{h \mu} dh
=I^{(k)}(\mu).
$$
The upper bound for $I^{(k)}(\mu)$ in \cite[Lemma 1]{Wein} yields
$$
|J^{(k)}(-\zeta+i\tau)| \le  \frac{(k-1)!}{\mu^k} +\frac{e^\mu}{\mu}\left(1-\frac{k-1}{\mu}+\frac{(k-1)(k-2)}{\mu^2}\right) \qquad (k\ge 3).
$$
Writing $\zeta=\mu-\eta i$, we have
$$ \frac{e^\mu}{\mu} =\frac{|e^\zeta|}{\mu}=\frac{ u|\zeta|}{\mu}=u \sqrt{1+\eta^2/\mu^2}\le u \left(1+\frac{\eta^2}{2\mu^2}\right).$$
Inserting this into the previous display and multiplying, we obtain
$$
|J^{(k)}(-\zeta+i\tau)| \le  \frac{(k-1)!}{\mu^k} +u\left(1-\frac{k-1}{\mu}+\frac{(k-1)(k-2)+\eta^2/2}{\mu^2}\right) \quad (k\ge 3),
$$
provided $\mu \ge k-2$, which is implied by $\mu\ge 6$ (i.e. $u\ge 58$) and $k\le 8$. 
Since $\pi <\eta<3.7$ for $u\ge 58$, the result now follows from
$$
 \frac{(k-1)!}{u\mu^{k-2}}+\frac{\eta^2}{2}
< \frac{(k-1)!}{58 \cdot 6^{k-2}}+\frac{3.7^2}{2}
<6.9
< 7 \qquad (3\le k \le 8,\ u\ge 58).
$$
\end{proof}

\begin{lemma}\label{lemReJk}
For $\zeta=\mu-i\eta$ as in Lemma \ref{lemzeta} and $\tau\ge -\eta$ we have
\begin{align*}
\mathrm{Re}(J(-\zeta+i\tau)-J(-\zeta)) & \le -0.4 u \tau^2  & (|\tau|\le 1,\ u\ge 10^3), \\
\mathrm{Re}(J(-\zeta+i\tau)-J(-\zeta)) & \le -0.39 u   &  (1 \le |\tau|\le 4,\ u\ge 50), \\
\mathrm{Re}(J(-\zeta+i\tau)-J(-\zeta)) & \le - \frac{8u}{\log^2 u} &   (\tau \ge 4,\  u \ge 50).
\end{align*}
\end{lemma}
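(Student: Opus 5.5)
The plan is to write everything in terms of the real part of an explicit integral. Set $s=-\zeta+i\tau$ with $\tau$ real. The basic identity is
$$
J(-\zeta+i\tau)-J(-\zeta)=\int_{-\zeta}^{-\zeta+i\tau}J'(z)\,dz=-\int_0^\tau \frac{e^{\zeta-it}}{-\zeta+it}\,i\,dt .
$$
Using $e^{\zeta}=-u\zeta$, this becomes
$$
J(-\zeta+i\tau)-J(-\zeta)=-iu\int_0^\tau \frac{\zeta e^{-it}}{\zeta-it}\,dt .
$$
Taking real parts gives
$$
\mathrm{Re}\bigl(J(-\zeta+i\tau)-J(-\zeta)\bigr)=u\int_0^\tau \mathrm{Im}\Bigl(\frac{\zeta e^{-it}}{\zeta-it}\Bigr)dt .
$$
Since $\zeta/(\zeta-it)=1+it/(\zeta-it)$, the integrand is $-\sin t$ plus a correction of size $O(|t|/|\zeta|)$. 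To leading order, therefore, the real part is $u(\cos\tau-1)$. That is $\approx -u\tau^2/2$ for small $\tau$ and $\le -u(1-\cos 1)\approx -0.46u$ for $1\le|\tau|\le 4$ as long as $\tau$ stays away from $2\pi$. These two heuristics are the first two claimed bounds.

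For $|\tau|\le1$ I will expand the real part exactly. It equals
$$
u\,\mathrm{Re}\Bigl(\frac{1}{\zeta}\bigl[\cdots\bigr]\Bigr)-u(1-\cos\tau)+\text{error},
$$
and the error can be bounded by $u|\tau|^3/(3\mu)$ or similar, via $|\zeta-it|\ge\mu$. For $u\ge10^3$, Lemma \ref{lemzeta} gives $\mu>\log(u\log u)>8.8$. Combining this with $1-\cos\tau\ge 0.459\tau^2$ on $|\tau|\le1$ should leave room for the constant $0.4$. A cleaner route is the same computation done with the Taylor expansion of $J$ at $-\zeta$. By \eqref{eqJk} we have $J''(-\zeta)=u(1-1/\zeta)$, and the third derivative along the segment is controlled by \eqref{eqLub} (applicable since $u\ge 58$). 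The real part is then at most $-\tfrac{\tau^2}{2}\mathrm{Re}J''(-\zeta)+\tfrac{|\tau|^3}{6}\sup|J'''|$. By Lemma \ref{lemJk} this is $\le -\tfrac{\tau^2}{2}u(1-1/\mu)+\tfrac{|\tau|^3}{6}u(1-2/\mu+9/\mu^2)$. For $|\tau|\le1$ that bound is not quite $-0.4u\tau^2$. So I will instead keep the fourth-order term: I write the real part as $-u(1-\cos\tau)$ plus terms with explicit $1/\mu$ factors, and bound those by $(|\tau|^2/\mu)\,u$ times a modest constant.

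For $1\le|\tau|\le4$ with $u\ge50$ (so $\mu>5.2$), I will use the same integral representation. The task is to bound
$$
\int_0^\tau \mathrm{Im}\Bigl(\frac{it\,e^{-it}}{\zeta-it}\Bigr)dt \quad\text{by}\quad \int_0^{|\tau|}\frac{t}{|\zeta-it|}\,dt .
$$
This error is non-negligible when $\mu\approx5$, since it can be as large as $8/5$. Crude bounds therefore fail, and I will need the sign structure: $\zeta=\mu-i\eta$ with $\eta\approx\pi$, and $\tau\ge-\eta$ keeps $|\zeta-it|\ge\mu$. My plan is to compute the integrand's imaginary part exactly as a rational-trigonometric function of $(t,\mu,\eta)$. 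I will check its sign and size monotonically in $\mu$ for large $u$, and for a finite range of $u$ I will verify by direct evaluation (graphing), in the spirit of the other lemmas.

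For $\tau\ge4$ I will use the asymptotic formula of Lemma \ref{lemJ}, which applies since $|\mathrm{Im}(-\zeta+i\tau)|=|\eta+\tau|\ge3$. It gives
$$
J(-\zeta+i\tau)\approx \frac{e^{\zeta-i\tau}}{-\zeta+i\tau}=\frac{u\zeta e^{-i\tau}}{\zeta-i\tau},
$$
whose real part has modulus at most $u|\zeta|/|\zeta-i\tau|$ plus $O(u/\mu)$. Lemma \ref{lemJ} also gives $\mathrm{Re}J(-\zeta)\ge u$. It therefore suffices to show
$$
\mathrm{Re}\frac{\zeta e^{-i\tau}}{\zeta-i\tau}\le 1-\frac{8}{\log^2u}
$$
up to the error terms. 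For large $\tau$ the modulus ratio $|\zeta|/|\zeta-i\tau|$ is visibly below $1$. The delicate region is $\tau$ near $2\pi$, where $e^{-i\tau}\approx1$. There the gain comes from
$$
\frac{\zeta}{\zeta-i\tau}=1+\frac{i\tau}{\zeta-i\tau},
$$
whose real part is $1-\tau(\tau-\eta)/|\zeta-i\tau|^2$, roughly $1-\pi\cdot2\pi/\mu^2$. With $\mu\approx\log u$ this gives the $1/\log^2u$ saving, and $2\pi^2/(\log u+\cdots)^2$ exceeds $8/\log^2 u$ only marginally. \textbf{This is the main obstacle}: matching the constant $8$ against the error $10u|\zeta|^{-3}$-type terms. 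I expect to need the precise $\mu$ bounds of Lemma \ref{lemzeta} for large $u$ and numerical verification for moderate $u$.
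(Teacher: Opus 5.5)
Your treatment of $|\tau|\le 1$ and $1\le|\tau|\le4$ matches the paper. You split off $u(\cos\tau-1)$ and bound the remainder by $u\tau^2/(2\mu)$; the constant $1/2$ matters, since with $\mu>8.84$ it leaves $0.4597-0.0566>0.4$. Crude bounds then work for $u\ge10^3$, and numerics cover $50\le u\le10^3$.

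The gap is in the range $\tau\ge4$, and the reduction you write down would fail as stated. You bound $\mathrm{Re}\,J(-\zeta+i\tau)$ by $u|\zeta|/|\zeta-i\tau|+O(u/\mu)$ and compare it with $\mathrm{Re}\,J(-\zeta)\ge u$. But the saving $1-|\zeta|/|\zeta-i\tau|$ is only of order $1/\mu^2$. An unmatched error of order $u/\mu$ swamps it, and the resulting upper bound for the difference is positive, roughly $+u/\mu$. So the real obstacle is this first-order term, not the $10u|\zeta|^{-3}$ term.

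The missing step is to expand both sides to the same order so that the correction terms cancel. Lemma \ref{lemJ} gives $|J(-\zeta+i\tau)|\le u\frac{|\zeta|}{|\zeta-i\tau|}\bigl(1+\frac1\mu+\frac2{\mu^2}+\frac{10}{\mu^3}\bigr)$. The same lemma also gives $\mathrm{Re}\,J(-\zeta)\ge u\bigl(1+\frac1\mu+\frac2{\mu^2}-\frac{22}{\mu^3}-\frac{71}{\mu^4}\bigr)$ for $u\ge10^4$. The $1/\mu$ and $2/\mu^2$ terms then cancel, leaving only $O(u/\mu^3)$ against the $1/\mu^2$ saving.

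Your real-part computation also has a sign error. Since $\zeta-i\tau=\mu-i(\eta+\tau)$, one has $\mathrm{Re}\frac{\zeta}{\zeta-i\tau}=1-\frac{\tau(\tau+\eta)}{|\zeta-i\tau|^2}$, not $1-\frac{\tau(\tau-\eta)}{|\zeta-i\tau|^2}$. The saving near $2\pi$ is therefore about $6\pi^2/\mu^2$, not $2\pi^2/\mu^2$.

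More importantly, there is no delicate region near $2\pi$ at all. Because $|\zeta-i\tau|^2=\mu^2+(\eta+\tau)^2$ increases for $\tau\ge-\eta$, the modulus alone gives, uniformly in $\tau\ge4$,
$\frac{|\zeta|}{|\zeta-i\tau|}\le\frac{|\zeta|}{|\zeta-4i|}<1-\frac{4\eta+8}{\mu^2+(\eta+4)^2}$.
This is what the paper uses to reach $-13.1u/\mu^2<-8.1u/\log^2u$ for $u\ge10^4$.

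Even so, the constants are tight: at $u=10^4$ one has $\mu/\log u\approx1.27$. At $u=50$ the constant $8$ is almost sharp, since the maximum of $\mathrm{Re}(J(-\zeta+i\tau)-J(-\zeta))\log^2u/u$ is $-8.08$ near $\tau\approx6.78$. So for $50\le u\le10^4$ you need numerics on a bounded $\tau$-range plus a separate bound for large $\tau$. The paper integrates by parts twice for $\tau\ge370$. Your remark that $|\zeta|/|\zeta-i\tau|\to0$, combined with $\mathrm{Re}\,J(-\zeta)\ge u$, would also work there.
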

\begin{proof}
For $\tau \ge -\eta$, we have
\begin{equation}\label{eqHt}
H(u,\tau):=J(-\zeta+i\tau)-J(-\zeta)=\int_{-\zeta+i\tau}^{-\zeta} \frac{e^{-s}}{s} ds = i \frac{e^\zeta}{-\zeta} \int_\tau^0 \frac{e^{-it}}{1-it/\zeta} dt.
\end{equation}
Since $e^\zeta=-u\zeta$,
$$
H(u,\tau)= iu \int_\tau^0 e^{-it} dt + iu  \int_\tau^0 e^{-it} \frac{it/\zeta}{1-it/\zeta} dt =: I_1 + I_2,
$$
say. Note that
$
\mathrm{Re}(I_1) = u(\cos \tau -1).
$
To estimate $I_2$, we write $\zeta=\mu-\eta i$ and observe that 
$$
|1-it/\zeta| \ge \mathrm{Re}(1-it/\zeta) = 1+ \frac{t\eta}{\mu^2+\eta^2}\ge  1- \frac{|\tau|\eta}{\mu^2+\eta^2}
> 1- \frac{4|\tau|}{\mu^2+4^2},
$$
since $\eta<4$ by Lemma \ref{lemzeta}. 
If $u\ge 10^3$, then  $|\zeta|\ge \mu >9$ and
$$
\mathrm{Re}(I_2)\le |I_2| \le u \frac{\tau^2/2}{|\zeta|(1-4|\tau|/(\mu^2+4^2))}<u \frac{\tau^2/2}{9(1-4|\tau|/(9^2+4^2))} =: u  f(\tau),
$$
say.

When $|\tau|\le 1$, then $f(\tau)\le 0.058 \tau^2$, while $\cos \tau -1 < -0.459\tau^2$,
which implies that $\mathrm{Re}(I_1+I_2)< -0.4 u \tau^2$, as claimed.

When $1\le |\tau| \le 4$ and $u\ge 10^3$, 
we find that $\cos \tau -1 + f(\tau) < -0.4$, which shows that $\mathrm{Re}(I_1+I_2)< -0.4 u$.
In the finite rectangles $1\le |\tau| \le 4$, $50\le u \le 10^3$, we verify the inequality
$\mathrm{Re}(H(u,\tau))/u \le -0.39$ by graphing the left-hand side. 

To prove the last inequality, we assume that $\tau \ge 4$. Integration by parts applied twice to \eqref{eqHt} shows that 
$$
\frac{H(u,\tau)}{u}=\frac{e^{-i\tau}}{1-i\tau/\zeta} -1 +\frac{1}{\zeta}\left(\frac{e^{-i\tau}}{(1-i\tau/\zeta)^2} -1 \right)
-\frac{2i}{\zeta^2}\int_0^\tau \frac{e^{-it}}{(1-it/\zeta)^3}dt.
$$
Since $|1-it/\zeta|\ge \max(1,t/|\zeta|)$, the modulus of the last integral is $\le \tau$ if $\tau \le |\zeta|$, and if $\tau > |\zeta|$ it is
$$
\le \int_0^\tau \left|\frac{e^{-it}}{(1-it/\zeta)^3}\right|dt \le \int_0^{|\zeta|} 1 dt + \int_{|\zeta|}^\tau \frac{|\zeta|^3}{t^3} dt< \frac{3}{2}|\zeta|.
$$
Thus, either way it is $< \frac{3}{2}|\zeta|$.
If $\tau\ge A|\zeta|$ and $|\zeta|\ge B$ for some constants $A$ and $B$, we obtain
$$
\mathrm{Re}\left( \frac{H(u,\tau)}{u}\right) \le  \frac{1}{A} -1 +\frac{1}{BA^2}+\frac{3}{B},
$$
since $\mathrm{Re}(-1/\zeta) <0$.
If $50\le u \le 10^4$, then $B:=6.9<|\zeta| < 12.3$. If $\tau \ge 370$, then $\tau \ge 30 |\zeta|$. 
With $A:=30$, this shows that 
$$
\mathrm{Re}(H(u,\tau)) < -0.53 u <-\frac{8u}{\log^2 u} \qquad (50\le u \le 10^4, \ \tau \ge 370).
$$
In the rectangle $50\le u \le 10^4$ , $4\le \tau \le 370$, we verify that this inequality holds by graphing 
$\mathrm{Re}(H(u,\tau)) (\log^2 u)/u$, which has a maximum of $-8.08...$ at $(u,t)=(50,6.78...)$. 

It remains to consider $u\ge 10^4$, $\tau \ge 4$. 
We first derive a lower bound for $\mathrm{Re}(J(-\zeta))$. 
Writing $\zeta=\mu-\eta i$, Lemma \ref{lemJ} yields
$$
\mathrm{Re}(J(-\zeta)) \ge u\left(1+ \frac{\mu}{\mu^2+\eta^2}+2\frac{\mu^2-\eta^2}{(\mu^2+\eta^2)^2}-\frac{10}{(\mu^2+\eta^2)^{3/2}}\right).
$$
As $\frac{1}{1+x}>1-x$ for $x>0$, this implies
$$
\mathrm{Re}(J(-\zeta)) \ge u\left(1+ \frac{1}{\mu}+\frac{2}{\mu^2}-\frac{10+\eta^2}{\mu^3}-\frac{6\eta^2}{\mu^4}\right).
$$
Since $\pi < \eta < 3.43$ for $u\ge 10^4$, we get
$$
\mathrm{Re}(J(-\zeta)) \ge u\left(1+ \frac{1}{\mu}+\frac{2}{\mu^2}-\frac{22}{\mu^3}-\frac{71}{\mu^4}\right).
$$

Next, we need an upper bound for $\mathrm{Re}(J(-\zeta+i\tau))$ when $\tau\ge 4$. 
From Lemma \ref{lemJ} we have
$$
\mathrm{Re}(J(-\zeta+i\tau))\le |J(-\zeta+i\tau)| \le u \frac{|\zeta|}{|\zeta-i\tau|}\left(1+\frac{1}{\mu}+\frac{2}{\mu^2}+\frac{10}{\mu^3}\right).
$$
Since $\tau\ge 4$ and $\zeta=\mu-\eta i$, 
$$
\frac{|\zeta|}{|\zeta-i\tau|} 
\le \frac{|\zeta|}{|\zeta- 4i|}
= \sqrt{1-\frac{8\eta+16}{\mu^2+(\eta+4)^2}}
<1-\frac{4\eta+8}{\mu^2+(\eta+4)^2}.
$$
The last expression
is decreasing in $\eta$, since $\pi<\eta\le 3.43$ and $\mu\ge 11.7$ when $u\ge 10^4$. Thus,
$$
\frac{|\zeta|}{|\zeta-i\tau|}< 1-\frac{4\pi+8}{\mu^2+(\pi+4)^2}<1-\frac{14.9}{\mu^2} \qquad (\mu >11.7)
$$
and
$$
\mathrm{Re}(J(-\zeta+i\tau))< u \left(1+\frac{1}{\mu}-\frac{12.9}{\mu^2}-\frac{4.9}{\mu^3}-\frac{29.8}{\mu^4}\right).
$$
Combining this with the lower bound for $\mathrm{Re}(J(-\zeta))$ we get
$$
\mathrm{Re}(J(-\zeta+i\tau))-\mathrm{Re}(J(-\zeta)< u\left(-\frac{14.9}{\mu^2}+\frac{17.1}{\mu^3}+\frac{41.2}{\mu^4}\right)<-\frac{13.1u}{\mu^2},
$$
since $\mu>11.7$. The last expression is $<-8.1u/\log^2 u$ when $u\ge 10^4$, which completes the proof. 
\end{proof}

\begin{lemma}\label{lemtail}
For real $\delta >0$, $A>0$, and $1\le m < \delta^2 A+1$, we have
$$
\int_\delta^\infty t^m e^{-A t^2/2} dt \le \frac{\delta^{m+1}}{\delta^2 A - (m-1)} e^{-A\delta^2/2}.
$$
\end{lemma}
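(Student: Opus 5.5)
Since the integrand $t^m e^{-At^2/2}$ is not elementary to integrate for general real $m$, the plan is to bound it pointwise by something whose antiderivative is explicit. We compare with an exact derivative. For $t\ge\delta$, consider
$$
g(t):=-\frac{t^{m-1}}{A}\,e^{-At^2/2}.
$$
Differentiating gives
$$
g'(t)=t^m e^{-At^2/2}-\frac{m-1}{A}\,t^{m-2}e^{-At^2/2}.
$$
Since $m\ge 1$, we have $t^{m-2}=t^m/t^2\le t^m/\delta^2$ for $t\ge\delta$. Hence
$$
g'(t)\ge t^m e^{-At^2/2}\Bigl(1-\frac{m-1}{A\delta^2}\Bigr).
$$
The hypothesis $m<\delta^2A+1$ says exactly that the factor $c:=1-(m-1)/(A\delta^2)$ is positive.

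Next we integrate from $\delta$ to $\infty$. Because $A>0$, $g(t)\to 0$ as $t\to\infty$, so
$$
c\int_\delta^\infty t^m e^{-At^2/2}\,dt\le \lim_{T\to\infty}g(T)-g(\delta)=\frac{\delta^{m-1}}{A}\,e^{-A\delta^2/2}.
$$
Dividing by $c>0$, and using $\frac{\delta^{m-1}}{A}\cdot\frac{A\delta^2}{A\delta^2-(m-1)}=\frac{\delta^{m+1}}{\delta^2A-(m-1)}$, gives exactly the stated bound.

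An equivalent route is to integrate by parts, writing $t^m e^{-At^2/2}=t^{m-1}\cdot t e^{-At^2/2}$. This produces the boundary term $\delta^{m-1}e^{-A\delta^2/2}/A$ plus $\frac{m-1}{A}\int t^{m-2}e^{-At^2/2}\,dt$. Bounding $t^{m-2}\le t^m/\delta^2$ in the remaining integral and solving the resulting linear inequality for the original integral leads to the same result.

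There is no real obstacle here. The only points needing care are the sign conditions: $m\ge 1$ is what makes the inequality $t^{m-2}\le t^m/\delta^2$ go the right way, and the upper bound on $m$ is what makes $c>0$, so that we may divide by it.
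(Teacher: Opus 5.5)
Your proof is correct, and it takes a different, more self-contained route than the paper. The paper does not argue directly. It substitutes $s=At^2/2$, which turns the integral into $2^{(m-1)/2}A^{-(m+1)/2}\,\Gamma\bigl(\tfrac{m+1}{2},\tfrac{A\delta^2}{2}\bigr)$. It then quotes the incomplete-gamma tail bound $\Gamma(a,x)\le x^a e^{-x}/(x-(a-1))$ for $x>a-1\ge 0$ from Pinelis. With $a=(m+1)/2$ and $x=A\delta^2/2$, the hypotheses $1\le m<\delta^2A+1$ are exactly $x>a-1\ge 0$, and the constants collapse to the stated right-hand side.

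Your comparison with $g(t)=-t^{m-1}e^{-At^2/2}/A$ is the standard proof of that same incomplete-gamma bound, carried out directly in the Gaussian variable. That proof combines the recursion $\Gamma(a,x)=x^{a-1}e^{-x}+(a-1)\Gamma(a-1,x)$ with $\Gamma(a-1,x)\le \Gamma(a,x)/x$. The paper's version is a one-line reduction at the cost of an external reference. Yours needs no citation and shows why each hypothesis is needed.

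One small wording point: for $t\ge\delta$, the inequality $t^{m-2}\le t^m/\delta^2$ holds for every real $m$. What $m\ge 1$ actually gives you is that the coefficient $-(m-1)/A$ is nonpositive. Multiplying the inequality by it then produces the lower bound on $g'(t)$ in the right direction.
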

\begin{proof}
This follows from a change of variables in the bound \cite[Prop.~2.7]{Pin} 
$$
\Gamma(a,x):=\int_x^\infty t^{a-1} e^{-t} dt \le \frac{x^a e^{-x}}{x-(a-1)}
$$
for real $x$, $a$ with $x>a-1\ge 0$.
\end{proof}

\subsection{Proof of Theorem \ref{thm2}}
We start with the inverse Laplace integral
\begin{equation*}\label{eqlapinv}
\omega(u)  =\frac{1}{2\pi i} \int_{1-i\infty}^{1+i\infty} \widehat{\omega}(s) e^{us} ds.
\end{equation*}
Writing $s=\sigma+it$, we have $|J(s)| \le e^{-\sigma} |t|^{-1}$, 
which implies $\widehat{\omega}(s) =o(1)$ as $|t|\to \infty$ and $|\sigma|\ll 1$,
by Lemma \ref{lemlap}. 
This allows us to move the line of integration to the left of the origin, picking up the residue of $e^{-\gamma}$, to get
\begin{equation}\label{Wint}
W(u)=\frac{1}{2\pi i} \int_{-\zeta-i\infty}^{-\zeta+i\infty}  (e^{J(s)}-1)e^{us} ds = W^+(u)+W^-(u),
\end{equation}
where $W^+(u)$ and $W^-(u)$ denote the contributions from $\mathrm{Im}(s)>0$ and $\mathrm{Im}(s)<0$,
respectively. Then $W^-(u)=\overline{W^+(u)}$ and 
$$W(u)=W^+(u)+\overline{W^+(u)}= 2 \mathrm{Re}(W^+(u)) .$$
Writing $\mu = \mathrm{Re}(\zeta)$, we have
\begin{equation}\label{eqWp}
W^+(u)=\frac{1}{2\pi } \int_{0}^{\infty}  (e^{J(-\mu+it)}-1)e^{u(-\mu+it)} dt.
\end{equation}

We first consider the contribution to \eqref{eqWp} from $t \ge e^\mu$, where
$$
|J(-\mu+i t)| = \left|\int_0^\infty \frac{e^{\mu - i t -v}}{-\mu+it+v} dv \right|\le \frac{e^{\mu}}{t}\le 1.
$$ 
Approximating the factor $(e^J-1)$ in \eqref{eqWp} by $J$, we have
$$
|e^{J(-\mu+it)} -1 - J(-\mu+it)| \le |J(-\mu+it)|^2 (e-2) \le \frac{e^{2\mu} (e-2)}{t^2}.
$$
The contribution of this last term to \eqref{eqWp} from $t \ge e^\mu$ is
$$
\le \frac{e^{-u\mu+ 2\mu} (e-2)}{2\pi  e^\mu}=
 |\Phi(u)|\frac{e^{\mu}(e-2) \sqrt{2 \pi |J''(-\zeta)|}}{2\pi \exp\{\mathrm{Re}(J(-\zeta))\}}.
$$
Since $\mu \le \log(u\log u)+0.6$ for $u\ge 50$, by Lemma \ref{lemzeta}, $|J''(-\zeta)|\le u$ by Lemma \ref{lemJk} and $\mathrm{Re}(J(-\zeta))\ge u$
by Lemma \ref{lemJ}, this is 
$$
\le  |\Phi(u)|\frac{ e^{0.6} (e-2) u^{3/2}\log u }{\sqrt{2\pi}  e^u} =: |\Phi(u)| E_1(u) \qquad (u\ge 50).
$$
Integration by parts shows that the modulus of the contribution of $J(-\mu+it)$ to \eqref{eqWp} from $t \ge e^\mu$ is
$$
< \frac{e^{-u\mu}}{2 \pi}\cdot \frac{2}{u}
\le  |\Phi(u)|\frac{\sqrt{2 \pi u}}{\pi u e^u}
=: |\Phi(u)| E_2(u)\qquad (u\ge 3).
$$

The modulus of the contribution of the term $-1$ to \eqref{eqWp} from $0 \le t \le e^\mu$ is
$$
\left| - \frac{e^{-u \mu}}{2\pi iu}(e^{iu e^\mu}-1) \right| \le  \frac{e^{-u \mu}}{\pi u} 
\le  |\Phi(u)|\frac{\sqrt{2 \pi u}}{\pi u e^u}
= |\Phi(u)| E_2(u) \qquad (u\ge 2).
$$

It remains to estimate the contribution of $e^{J(-\mu+it)}$ to   \eqref{eqWp} from $0 \le t \le e^\mu$.
Here we change variables via $-\mu+it= -\zeta+i\tau$, i.e. $t = \mathrm{Im}(-\zeta) +\tau = \eta+\tau $. 

Let $\delta:= \sqrt{8\log(u)/u}$. When $u\ge 10^3$, Lemma \ref{lemReJk} shows that
 the contribution of $e^{J(-\mu+i\tau)}$ to   \eqref{eqWp} from $\delta \le |\tau|\le 1$ is
$$
\le \frac{e^{\mathrm{Re} (J(-\zeta)-u\zeta)}}{2\pi} 2 \int_\delta^{1}e^{-0.4 u\tau^2}d\tau
\le \frac{e^{\mathrm{Re} (J(-\zeta)-u\zeta) -0.4u \delta^2}}{2\pi u 0.4\delta },
$$
hence
$$
\le |\Phi(u)| \frac{e^{ -0.4 u \delta^2}}{\sqrt{2\pi u}0.4\delta }=:|\Phi(u)| E_3(u) \qquad (u\ge 10^3).
$$

The contribution  of $e^{J(-\mu+i\tau)}$ to   \eqref{eqWp} from $1 \le |\tau| \le 4$ is 
$$
\le \frac{e^{\mathrm{Re} (J(-\zeta)-u\zeta)}}{2\pi} 2 \int_1^{4}e^{-0.39 u}d\tau
= \frac{3e^{\mathrm{Re} (J(-\zeta)-u\zeta) -0.39u }}{\pi }<e^{\mathrm{Re} (J(-\zeta)-u\zeta) -0.39u },
$$
by Lemma \ref{lemReJk}, hence
$$
<  |\Phi(u)|  \sqrt{2\pi u}e^{-0.39u}=:  |\Phi(u)| E_4(u) \qquad (u\ge 50).
$$

The contribution  of $e^{J(-\mu+i\tau)}$ to   \eqref{eqWp} from $4 \le \tau \le e^\mu -\eta$ is
$$
\le \frac{e^{\mathrm{Re} (J(-\zeta)-u\zeta)}}{2\pi}  \int_4^{e^\mu -3}e^{- \frac{8u}{\log^2 u}}d\tau
= \frac{ (e^ \mu - 7) e^{\mathrm{Re} (J(-\zeta)-u\zeta) - \frac{8u}{\log^2 u} }}{2\pi },
$$
by Lemma \ref{lemReJk}, hence
$$
<  |\Phi(u)|  \frac{\sqrt{ u}  (e^{0.6} u\log(u)- 7) e^{ - \frac{8u}{\log^2 u} }}{\sqrt{2\pi} }=:  |\Phi(u)| E_5(u) \qquad (u\ge 50).
$$

When $-\delta \le \tau \le \delta$, we use the Taylor expansion of $J(-\zeta+i\tau)$ around $\tau=0$.
We write
$$
J_k:=J^{(k)}(-\zeta) \quad (2 \le k \le 7), \qquad J_8:=\max_{|\tau|\le \delta}|J^{(8)}(-\zeta +i\tau)|.
$$
 Since
 $J'(-\zeta)=-u$,
$$
J(-\zeta+i\tau)-J(-\zeta)+ui\tau = \sum_{k=2}^7 \frac{J_k}{k!}(i\tau)^k + \frac{J_8\ \kappa_{u,\tau}}{8!} \tau^8,
$$
where $|\kappa_{\tau,u}|\le 1$. 
Thus,
\begin{equation}\label{eqhint}
\int_{-\delta}^{\delta} e^{J(-\zeta+i\tau)-J(-\zeta)+ui\tau }d\tau = \int_{-\delta}^\delta e^{-J_2\tau^2/2}h(u,\tau)d\tau, 
\end{equation}
where 
$$
h(u,\tau):= \exp\left\{ \sum_{k=3}^7 \frac{J_k}{k!}(i\tau)^k + \frac{J_8\kappa_{u,\tau}}{8!} \tau^8\right\}.
$$
Writing $M$ for the exponent, we have
$$
h(u,\tau)=e^M=\sum_{j=0}^5 \frac{M^j}{j!}+\frac{e^{1/4} |M|^6 \nu_M}{6!} \qquad (\mathrm{Re}( M)  \le 1/4),
$$
where $|\nu_M|\le 1$. 
We claim that 
$$
\mathrm{Re}( M)  \le |\mathrm{Im}( J_3)|  \frac{\delta^3}{3!}+\sum_{k=4}^8 \frac{|J_k|}{k!}\delta^k < \frac{1}{4}
\qquad (u\ge 860).
$$
For $u\ge 10^6$, this follows from $ |\mathrm{Im}( J_3)| < |J_3|$ and  \eqref{eqLub};
when $860\le u \le 10^6$, we graph the expression of the 
last display, with $|J_8|$ replaced by its upper bound in \eqref{eqLub}, to verify this claim.

Let $h_k$ be $M^k$ without all the terms that contribute zero to the integral in \eqref{eqhint} and write
$$
 a_k:= \frac{J^{(k)}(-\zeta)}{k!}=\frac{J_k}{k!}, \qquad L_k:=|J_k|.
$$
We have
$$h_0=1,$$ 
$$
h_1=a_4 \tau^4 - a_6  \tau^6 +   \theta_1 \tau^8 R_1(\tau),
$$
$$
h_2= -a_3^2 \tau^6 +2a_3a_5\tau^8 +a_4^2 \tau^8
+\theta_2  \tau^{10} R_2(\tau),
$$
$$
h_3=-3a_3^2 a_4\tau^{10}  +  \theta_3   \tau^{12} R_3(\tau)
$$
$$
h_4=a_3^4 \tau^{12} + \theta_4  \tau^{14} R_4(\tau),
$$
$$
h_5 =\theta_5 \tau^{16} R_5(\tau),
$$
where $|\theta_i|\le 1$, $R_1(\tau):=L_8/8!$, 
$$
R_2(\tau):= \sum_{3\le i,j\le 8, \ i+j \ge 10 \atop  i+j \equiv 0 \bmod 2 \text{ or } \max(i,j)=8}\frac{\tau^{i+j-10}}{i!j!}L_i L_j,
$$
$$
R_3(\tau):=\sum_{3\le i,j,k\le 8, \ i+j+k \ge 12 \atop  i+j +k\equiv 0 \bmod 2 \text{ or } \max(i,j,k)=8}\frac{\tau^{i+j+k-12}}{i!j!k!} L_i L_j L_k,
$$
et cetera.

Define 
$$K_2:=\mathrm{Re}(J_2) = \mathrm{Re}(J''(-\zeta))   \ge u(1-1/\mu),$$
by Lemma \ref{lemJk}.
With the help of the upper bound \eqref{eqLub} when $\mu\ge 16$ (where the right-hand side of \eqref{eqLub}
is decreasing in $k$), and by graphing the exact values when $\mu\le 16$, 
we verify that $L_8 < K_2^4 u^{-3}$ for $u\ge 1$,
and each product $L_i L_j$ appearing in $R_2$ satisfies $L_i L_j < K_2^5  u^{-3}$ for $u\ge 1$, et cetera, so
that each product of $L$'s appearing in $R_j$ is $<K_2^{j+3} u^{-3}$ for $1\le j \le 5$ and $u\ge 1$. 
Since $|\tau|\le \delta \le 1/4$, which holds for $u\ge 870$, we have $|R_j(\tau)| \le R_j(1/4) $.
Let $ \gamma_j$ denote the value of the
sum $R_j(1/4)$, but without the products of the $L$'s. Then
$$
 |R_j(\tau)| \le \gamma_j  K_2^{j+3} u^{-3}.
$$

We claim that $L_{k}\le L_3$ for $4\le k \le 8$ and $u\ge 10^3$. This follows from applying \eqref{eqLub} for $4\le k \le 8$. For
$L_3$, the lower bound $L_3\ge |\mathrm{Re}(J_3)|>u(1-2/\mu)$ from Lemma \ref{lemJk} suffices for $u\ge 4\cdot 10^4$, 
while for $10^3\le u \le  4\cdot 10^4$ we graph
the exact value of $L_3=|J_3|$ and the upper bound \eqref{eqLub} for $L_k$.  Thus,
$$
|M|\le \sum_{k=3}^8 \frac{L_k}{k!}|\tau|^k \le \frac{L_3 |\tau|^3}{3!} 1.065761 \quad  (|\tau|\le \delta \le 1/4).
$$
This implies 
$$
|e^{1/4} M^6| \le \frac{L_3^6 \tau^{18}}{(3!)^6} e^{1/4} 1.4655 <  \frac{u^{-3} K_2^9 \tau^{18}}{(3!)^6} e^{1/4} 1.4655
 =: u^{-3} K_2^9 \tau^{18} \gamma_6,
$$
since $L_3^6< u^{-3}K_2^9$ for $u\ge 1$, which is verified as above.
The contribution of the $R_j$'s and $|e^{1/4} M^6|$ to \eqref{eqhint} is 
\begin{multline*}
< \sum_{j=1}^6 \frac{\gamma_j  K_2^{3+j}}{u^3 j!} \int_{-\infty}^\infty e^{-K_2\tau^2/2} \tau^{6+2j} d\tau
= \sqrt{\frac{2\pi}{K_2}}\sum_{j=1}^6 \frac{\gamma_j  K_2^{3+j}}{u^3 j!} \frac{(5+2j)!!}{K_2^{3+j}}\\
= \sqrt{\frac{2\pi}{K_2}}\frac{1}{u^3}\sum_{j=1}^6 \frac{\gamma_j (5+2j)!!}{j!}
< \sqrt{\frac{2\pi}{K_2}} \frac{8.29}{u^3}< \sqrt{\frac{2\pi}{|J_2|}} \frac{8.3}{u^3},
\end{multline*}
where $n!!$ is the product of all the positive integers up to $n$ that have the same parity as $n$.
The last estimate follows from 
$$\frac{|J_2|}{K_2}=\sqrt{1+\frac{\eta^2}{(\mu^2+\eta^2-\mu)^2}} \le 
1+\frac{4^2}{2(9.1^2+4^2-9.1)^2} < 1.001 \qquad (u\ge 10^3). 
$$
Writing the main terms as 
$g(\tau):=1+\sum_{j=2}^6 c_j(u) \tau^{2j}$, Lemma \ref{lemtail} yields
$$
\int_\delta^\infty  e^{-K_2\tau^2/2} |g(\tau)| d\tau 
\le \frac{e^{-K_2 \delta^2/2}}{K_2\delta} \left(1 + \frac{3}{2}\sum_{j=2}^6 |c_j(u)| \delta^{2j} \right) , 
$$
provided $K_2 \delta^2 \ge 33$, which is the case if $u\ge 132$. 
With \eqref{eqLub} we find that
$$
1 + \frac{3}{2}\sum_{j=2}^6 |c_j(u)| \delta^{2j} \le 6
$$
for $u\ge 770$, which implies
$$
2\int_\delta^\infty  e^{-K_2\tau^2/2}|g(\tau)| d\tau 
<\frac{12 e^{-K_2 \delta^2/2}}{\delta K_2} = \frac{ 12e^{-K_2 \delta^2/2} \sqrt{|J_2|}}{ \delta K_2  \sqrt{2\pi}} \sqrt{\frac{2\pi}{ |J_2|}}  .
$$
Since $K_2 \ge u(1-1/\mu)\ge u(1-1/L)$ with $L:=\log(u\log u)$, and $|J_2| \le u$, by Lemmas \ref{lemzeta} and \ref{lemJk}, this is
$$
< \frac{ 12e^{-u(1-1/L)\delta^2/2} \sqrt{u}}{ \delta u(1-1/L)  \sqrt{2\pi}} \sqrt{\frac{2\pi}{ |J_2|}} 
=: E_6(u)  \sqrt{\frac{2\pi}{ |J_2|}} \qquad (u\ge 770).
$$

Now $f(u):=u^3(E_2(u)+\sum_{k=1}^6 E_k(u))=u^{3-(0.4)8+o(1)}=u^{-0.2+o(1)}$ has a limit of zero and is decreasing for $u\ge 10$, while 
$f(730)<0.06$. 

Combining everything, we obtain 
\begin{equation}\label{eqfin}
W^+(u) = \Phi(u) \left(1+\alpha(u) +\beta(u)+\lambda(u) u^{-3}\right),
\end{equation}
where $|\lambda(u)|<8.3 + 0.06 =8.36$ for $u\ge 10^3$,
$$
\alpha(u) := \frac{a_4 3!!}{J_2^2} - \frac{a_3^2 5!!}{2! J_2^3}
=\frac{J_4}{8 J_2^2}-\frac{5J_3^2}{24 J_2^3}  
=-\frac{ \zeta^4-4 \zeta^3+\frac{13}{2} \zeta^2-2 \zeta+1}{ 12u \zeta (\zeta-1)^3 }
\sim -\frac{1}{12u},
$$
\begin{equation*}
\begin{split}
\beta(u):= & -\frac{a_6 5!!}{J_2^3}+\frac{(2 a_3 a_5 + a_4^2) 7!!}{2! J_2^4}- \frac{3a_3^2 a_4 9!!}{3! J_2^5}
+\frac{a_3^4 11!!}{4! J_2^6}\\
= & 
-\frac{J_6}{48 J_2^3}
+\frac{7 J_3 J_5}{48  J_2^4}
+\frac{35 J_4^2}{384 J_2^4}
-\frac{35 J_3^2 J_4}{64 J_2^5}
+\frac{385 J_3^4}{1152 J_2^6}\\
= & \frac{\zeta^8-8 \zeta^7+17 \zeta^6+4\zeta^5+\frac{25}{4} \zeta^4+62 \zeta^3+17\zeta^2-4 \zeta+1}{288u^2 \zeta^2 (\zeta-1)^6}
 \sim \frac{1}{288 u^2}.
\end{split}
\end{equation*}

Note that $u^2 \beta(u)=  \frac{1}{288}(1-2/\zeta +O(1/\zeta^2))$ and
$\lim_{u\to \infty} u^2 \beta(u) = \frac{1}{288}$. 
Moreover, $u^2 |\beta(u)|$ is increasing for $u\ge 85$,
so $|u^2 \beta(u) |< \frac{1}{288}$ for $u\ge 85$. 
From \eqref{eqfin} we obtain, for $u\ge 10^3$,
$$
W^+(u)=\Phi_2(u)\{1+ (\beta(u)+\lambda(u)u^{-3})/(1+\alpha(u))\},
$$
which implies that
\begin{equation}\label{eqtheta2}
|\theta_2(u)|\le |\beta(u) + \lambda(u) u^{-3} | /|1+\alpha(u)| <\left(\frac{1}{288u^2} + \frac{8.36}{u^3}\right)/\left(1-\frac{1}{10u}\right),
\end{equation}
since $|1+\alpha(u)|>1-\frac{1}{10u}$ by Lemma \ref{lemalpha}.
Thus, $|\theta_2(u)|<0.005/u^2$ if $u\ge 5500$. 
For $6\le u \le 5500$, we calculate $W^+(u)$ numerically as described in Section \ref{SecNum}, 
to find that Theorem \ref{thm2} also holds in this range.
This completes the proof of Theorem \ref{thm2}.

In proving Theorem \ref{thm2}, we also proved Theorem \ref{thm3}, which follows from \eqref{eqfin} for $u\ge 10^3$.
The range $1\le u\le 10^3$ is again established in Section \ref{SecNum}. 

\begin{theorem}\label{thm3}
Let $\Phi_3(u):=\Phi(u)(1+\alpha(u)+\beta(u))$. 
We have 
$$
W(u)=2 |\Phi_3(u)|\left\{\cos(\arg(\Phi_3(u)))+\theta_3(u)\right\},
$$
where $|\theta_3(u)|< 0.01 u^{-3}$ for $18 \le u \le 10^3$ and $|\theta_3(u)|<8.4 u^{-3}$ for $u\ge 10^3$.
\end{theorem}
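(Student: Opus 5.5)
The large-$u$ range $u\ge 10^3$ comes straight from \eqref{eqfin}, already established in the proof of Theorem \ref{thm2}; the range $18\le u\le 10^3$ I would treat by numerical verification.

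\textbf{Range $u\ge 10^3$.} By \eqref{eqfin},
$$
W^+(u)=\Phi(u)\bigl(1+\alpha(u)+\beta(u)+\lambda(u)u^{-3}\bigr)=\Phi_3(u)\Bigl(1+\frac{\lambda(u)u^{-3}}{1+\alpha(u)+\beta(u)}\Bigr),
$$
with $|\lambda(u)|<8.36$. Since $W(u)=2\,\mathrm{Re}(W^+(u))$, this gives
$$
W(u)=2\,\mathrm{Re}\bigl(\Phi_3(u)(1+\epsilon)\bigr)=2|\Phi_3(u)|\bigl\{\cos(\arg\Phi_3(u))+\mathrm{Re}(e^{i\arg\Phi_3(u)}\epsilon)\bigr\},
$$
where $\epsilon:=\lambda(u)u^{-3}/(1+\alpha(u)+\beta(u))$. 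Hence $|\theta_3(u)|\le|\epsilon|$. To bound the denominator, I would use $|1+\alpha(u)|>1-\frac{1}{10u}$ (Lemma \ref{lemalpha}, already invoked in \eqref{eqtheta2}) together with $|\beta(u)|<\frac{1}{288u^2}$ for $u\ge 85$. Then
$$
|1+\alpha+\beta|>1-\frac{1}{10u}-\frac{1}{288u^2}>0.9998 \qquad (u\ge 10^3),
$$
so $|\theta_3(u)|<8.36/(0.9998\,u^3)<8.4\,u^{-3}$.

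\textbf{Range $18\le u\le 10^3$.} Here the asymptotic error bounds are useless, so I would follow the approach of Theorem \ref{thm2} and compute $W(u)$ numerically by the method of Section \ref{SecNum}. I would also compute $\Phi_3(u)$ from its closed form, using $\zeta=-W_1(1/u)$, $J=\Gamma(0,\cdot)$, \eqref{eqJk} and the explicit rational expressions for $\alpha$ and $\beta$. From these I would form
$$
\theta_3(u)=\frac{W(u)}{2|\Phi_3(u)|}-\cos(\arg\Phi_3(u))
$$
on a fine grid and check that $u^3|\theta_3(u)|<0.01$.

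\textbf{Main obstacle.} The difficulty is making this numerical check rigorous rather than merely graphical. $W(u)$ is tiny, of order $\rho(u)$, so the Section \ref{SecNum} computation needs high relative precision. In addition, the check must hold between grid points. For the latter I would bound the derivatives of $W/|\Phi_3|$ and of $\arg\Phi_3$; these are $O(1)$, by $(\arg\Phi)'\sim\pi$ and the delay equation. With such bounds a mesh of size roughly $10^{-4}$ suffices, given the observed slack in $|\theta_3|$. Near the zeros of $\cos(\arg\Phi_3)$ nothing special is needed, since $\theta_3$ is an additive error.
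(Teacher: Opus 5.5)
Your argument is essentially the paper's. For $u\ge 10^3$ the theorem is read off from \eqref{eqfin}, and your bound $|1+\alpha(u)+\beta(u)|>1-\frac{1}{10u}-\frac{1}{288u^2}>0.9998$, which turns $8.36$ into $8.4$, is exactly the bookkeeping the paper leaves implicit. For $18\le u\le 10^3$ the paper likewise just computes $W(u)$ as in Section \ref{SecNum} and graphs $\theta_3$.

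The one thing that does not work is your proposed way of making that graphical check rigorous. Bounding the derivatives of $W/|\Phi_3|$ and of $\arg\Phi_3$ separately by $O(1)$ gives only $|\theta_3'|=O(1)$, with a constant of a few units. The cancellation that makes $\theta_3$ small is lost, because each piece oscillates with frequency about $\pi$ and amplitude of order $1$. With mesh $h$ you then know $\theta_3$ between grid points only up to $O(h)$. So $h=10^{-4}$ leaves an uncertainty of order $10^{-4}$, whereas the target $0.01u^{-3}$ is about $1.7\cdot 10^{-6}$ at $u=18$ and $10^{-11}$ at $u=10^3$. No ``observed slack'' at the grid points can make up for this. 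The mesh actually needed is of order $10^{-3}u^{-3}$, which means on the order of $10^{14}$ high-precision evaluations of $W$.

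A rigorous treatment would need one of two things instead:
\begin{itemize}
\item a bound on $\theta_3'$ itself of size $O(u^{-3})$, which means redoing the saddle-point error analysis for the derivative (and for $u<10^3$ the paper's analytic estimates are not available at all); or
\item validated numerics that enclose $\theta_3$ on whole subintervals, e.g.\ interval arithmetic or Taylor models with rigorous remainders, applied both to the piecewise Taylor expansion of $W$ and to the analytic function $\Phi_3$.
\end{itemize}
The paper attempts neither, so this does not separate your proof from the paper's. But the specific claim in your last paragraph is false.
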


It appears that the factor $8.4$ is far from best possible and that $|\theta_3(u)|< 0.01 u^{-3}$ likely holds for all $u\ge 18$.

\section{Proof of Theorem \ref{thm1} and Corollary \ref{cor1}}

\begin{lemma}\label{lemalpha}
For $u\ge 2$ we have 
$$
\left| \alpha(u) +\frac{1}{12u}\right|< \frac{1}{12 u \log(u\log u)}.
$$
\begin{proof}
From Lemma \ref{lemJk} we get
$$
12 \zeta u\left(\alpha(u)+\frac{1}{12u}\right)=1-\frac{\zeta (\zeta+4)}{2 (\zeta-1)^3}=:f(\zeta),
$$
say. An exercise shows that $|f(x+iy)|<1$ for $x\ge 5$ and $-4 \le y \le 4$. 
Indeed, when $x\ge 10$ this follows from $|1-f(x+iy)|<1/x$ and $|\arg(1-f(x+iy))| < \pi/5$,
while for $5\le x \le 10$ we verify that $|f(x+iy)|<1$ with a contour plot.
 
From Lemma \ref{lemzeta} we find that
 $\mathrm{Re}(\zeta)>5$,
$-4<\mathrm{Im}(\zeta)<-3$, and 
$|\zeta|>\log(u\log u)$,  for $u\ge 25$. This establishes the claim when $u\ge 25$. 
For $2 \le u \le 25$, we verify the inequality by graphing both sides.
\end{proof}
\end{lemma}

\begin{proof}[Proof of Theorem \ref{thm1}]
Equation \eqref{eqfin} shows that $|\theta_1(u)|\le |\nu(u)|$, where
$$
\nu(u):=\frac{\frac{1}{12u}+\alpha(u) + \beta(u) +\lambda(u)u^{-3}}{1-\frac{1}{12u}}.
$$
From Lemma \ref{lemalpha} and \eqref{eqtheta2} we get, for $u\ge 10^3$,
$$
|\nu(u)| \le \frac{1}{12 u \log(u\log u)(1-\frac{1}{12u})}+\frac{0.012}{u^2}< \frac{1}{12 u \log u}.
$$
For $u\le 10^3$, we verify Theorem \ref{thm1} as described in Section \ref{SecNum}. 
\end{proof}

\begin{proof}[Proof of  Corollary \ref{cor1}]
When $u\ge 6$, Corollary \ref{cor1} follows from Theorem \ref{thm1}. 
For $3\le u \le 6$, we calculate $W(u)$ as described in Section \ref{SecNum} and graph $|W(u)/2\Phi(u)|$.
\end{proof}

\section{Numerical calculation of $W(u)$}\label{SecNum}

When $ u \le 50$, we verify the three theorems by calculating $W(u)$ with the algorithm of Marsaglia, Zaman and Marsaglia \cite{MZM},
and graphing $\theta_i(u)$ for $i=1,2,3$.   
To approximate $W(u)$ in the interval $[n,n+1]$, where $n$ is an integer, 
we find the coefficients of the Taylor polynomial of order $250$ centered
at $n+1/2$, recursively from the coefficients of the polynomial centered at $n-1/2$.

For $u\ge 50$, we evaluate the integral in \eqref{eqWp} numerically. 
As in the proof of Theorem \ref{thm2}, we first truncate the integral and estimate the contribution of $-1$, to get
$$
|W^+(u)-\tilde{W}^+(u)|< |\Phi(u)| E(u),
$$
where
$$
\tilde{W}^+(u):=\frac{1}{2\pi } \int_{-1}^{1}  e^{J(-\zeta+i\tau)+u(-\zeta+i\tau)} d\tau
$$
and
$$
E(u):=E_1(u)+2E_2(u)+E_4(u)+E_5(u).
$$
We rewrite the integral as
$$
\tilde{W}^+(u)=\frac{e^{J(-\zeta)-u\zeta}}{\sqrt{2\pi u}}
 \int_{-\sqrt{u}}^{\sqrt{u}} \frac{e^{J(-\zeta+i\tau/\sqrt{u})-J(-\zeta)+iu\tau/\sqrt{u}}}{\sqrt{2\pi}} d\tau
=:\frac{e^{J(-\zeta)-u\zeta}}{\sqrt{2\pi u}} S(u),
$$
say.
The proof of Theorem \ref{thm2} shows that the last integrand approaches the standard normal density function as $u$ grows.
Since 
$$
|\theta_1(u)|=\frac{|\mathrm{Re}(W^+(u)-\Phi_1(u))|}{|\Phi_1(u)|}
\le \left| \frac{W^+(u)}{\Phi_1(u)} -1 \right|
\le \left| \frac{\tilde{W}^+(u)}{\Phi_1(u)} -1 \right| +\left| \frac{\Phi(u) E(u)}{\Phi_1(u)}\right|,
$$
we get
$$
|\theta_1(u)|\le \left|\frac{S(u) \sqrt{1-1/\zeta}}{1-\frac{1}{12u}} -1\right|
+\left|\frac{E(u)}{1-\frac{1}{12u}}\right|=:H_1(u),
$$
say.
Using numerical integration in Mathematica to evaluate $S(u)$, we 
verify Theorem \ref{thm1} for $50\le u \le 10^3$ by graphing $H_1(u)$ and $\frac{1}{12u\log u}$. 
Replacing $1-\frac{1}{12u}$ by $1+\alpha(u)$ in the last display,
we verify Theorem \ref{thm2} for $50\le u \le 5500$. 
Theorem \ref{thm3} is confirmed similarly for $50\le u \le 10^3$, with $1+\alpha(u)+\beta(u)$ replacing $1-\frac{1}{12u}$.

\end{document}